\crefname{hypothesis}{Hypothesis}{Hypotheses}
\title{A pseudo-reversible normalizing flow for stochastic dynamical systems with various initial distributions
}
\author{M.~Yang\thanks{Fusion Energy Division, Oak Ridge National Laboratory,
 Oak Ridge, TN (\email{yangm@ornl.gov}, \email{delcastillod@ornl.gov}).}
 \and P.~Wang\thanks{Department of Mathematics, Auburn University, Auburn, AL (\email{pzw0030@auburn.edu}, \email{yzc0009@auburn.edu}).}
 \and D.~del-Castillo-Negrete\footnotemark[1]
\and Y.~Cao\footnotemark[2]
\and G.~Zhang\thanks{Computer Science and Mathematics Division, Oak Ridge National Laboratory, Oak Ridge, TN (\email{zhangg@ornl.gov}).}
}
\newlength{\tempdima}
\newcommand{\rowname}[1]
{\rotatebox{90}{\makebox[\tempdima][c]{\textbf{#1}}}}
\renewcommand{\thesubfigure}{\alph{subfigure}}
\newcommand{\mycaption}[1]
{\refstepcounter{subfigure}\textbf{(\thesubfigure) }{\ignorespaces #1}}
\begin{document}

\maketitle

\begin{abstract}
We present a pseudo-reversible normalizing flow method for efficiently generating samples of the state of a stochastic differential equation (SDE) with different initial distributions. The primary objective is to construct an accurate and efficient sampler  that can be used as a surrogate model for computationally expensive numerical integration of SDE, such as those employed in particle simulation.  After training, the normalizing flow model can directly generate samples of the SDE's final state without simulating trajectories. Existing normalizing flows for SDEs depend on the initial distribution, meaning the model needs to be re-trained when the initial distribution changes. The main novelty of our normalizing flow model is that it can learn the conditional distribution of the state, i.e., the distribution of the final state conditional on any initial state, such that the model only needs to be trained once and the trained model can be used to handle various initial distributions. This feature can provide a significant computational saving in studies of how the final state varies with the initial distribution. We provide a rigorous convergence analysis of the pseudo-reversible normalizing flow model to the target probability density function in the Kullback–Leibler divergence metric. 
Numerical experiments are provided to demonstrate the effectiveness of the proposed normalizing flow model.
\end{abstract}

\begin{keyword}
Stochastic differential equations, Normalizing Flows, Fokker-Planck equation
\end{keyword}

\begin{MSCcodes}
68Q25, 68R10, 68U05
\end{MSCcodes}
\section{Introduction}

The numerical solution of stochastic differential equations (SDE) for large ensembles of initial conditions is at the heart of modeling and simulation in science and engineering. For example, computations of interest to plasma physics rely on the solution of SDE tracking the evolution of electrons and ions in which the deterministic (drift) part describes the interaction with  
prescribed or self-consistent electromagnetic fields, and the stochastic part describes collisional effects. In fluid mechanics, 
SDE describe the evolution of passive tracers, e.g. pollutants in the oceans and the atmosphere, advected by prescribed velocity fields under the effect of molecular and/or turbulent diffusion. Further applications of SDE include the simulation of biological and chemical systems. From a mathematical perspective, SDE naturally appear as the discrete particle description formally equivalent to the Fokker-Planck description of the particle distribution function in the continuum limit. This equivalence is the foundation of particle-based Monte-Carlo methods for the numerical solution of Fokker-Planck type partial differential equations (PDE). Although particle based numerical methods offer advantages over continuum methods (e.g., finite difference, fine elements, or spectral), they also face limitations. In particular, to avoid numerical noise, Monte Carlo  methods need to integrate SDE for very large ensembles of particles to overcome the unfavorable slow convergence $\sim 1/\sqrt{N}$ where $N$ is the number of initial conditions.

To overcome numerical challenges faced by particle methods, in this paper we propose an efficient and accurate algorithm for the solution of SDE. Our approach is based 
on the use of pseudo-reversible normalizing flows (PR-NF) to efficiently generate samples of the state of an SDE for various initial distributions. Normalizing flows \cite{kobyzev2020normalizing,rezende2015variational,creswell2018generative,papamakarios2017masked, dinh2016density, grathwohl2018ffjord,GUO2022111202} are a class of models that map an unknown probability distribution to a standard probability distribution, e.g., the normal Gaussian distribution, such that one can generate samples from the unknown distribution by  sampling the standard distribution and then propagating through the flow model. 
%
%
%
Normalizing flows have been extended to learn stochastic dynamical systems  \cite{both2019temporal,lu2022learning,feng2021solving}. However, these models are not transferable, meaning they require model re-training every time the initial distribution is changed. 

We improve the normalizing flow models from three perspectives. First, we incorporate the initial state of the SDE as an input variable of the normalizing flow model, so that that our method aims at  learning the {conditional distribution} of the state at a given time instant. In this way, a single normalizing flow model can be used to handle different initial distributions by a simple convolution. In this sense, the proposed method can be viewed as an approach to learn the {Green's function} of the Fokker-Planck equation associated with the SDE. Second, we utilize the pseudo-reversible neural network architecture \cite{keller2021self, teng2021level, rippel2013high} to relax the strict reversibility constraint.
This architecture provides greater flexibility in designing flow transformations and offers potential benefits in terms of performance and applicability. The pseudo-reversible neural network architecture allows to use simple feed-forward neural networks to model both the forward and inverse flows, where an extra loss term is used to ensure pseudo-reversibility. Even though the proposed PR-NF has $\mathcal{O}(d^3)$ complexity in Jacobian determinant computation, we observe that it is not a bottleneck, especially with GPU-accelerated linear algebra libraries, for a wide range of physical processes defined in the six-dimensional phase space. Third, we provide a convergence analysis of the PR-NF model to the target probability density function in the Kullback–Leibler divergence metric, where the simple structure of the PR-NF model makes it feasible to exploit the universal approximation theorem of fully-connected neural networks in our convergence analysis.

The outline of the rest of the paper is as follows. In Section \ref{prob:set} we formulate the problem and define the SDE of interest.  The details of the surrogate model based on Normalizing Flows architecture are described in Section \ref{sec:prnn}. Section \ref{sec:analysis} provides the convergence analysis of the PR-NF model. Section \ref{sec:ex} is devoted to examples including a Brownian motion validation model, a problem of interest to magnetically confined fusion plasmas and an advection-diffusion  transport problem in fluid mechanics. 

\section{Problem setting}\label{prob:set}
%
We consider the $d$-dimensional stochastic process $X_t$, corresponding to the solution of the stochastic differential equation
\begin{equation}\label{eq_sde}
X_t= X_0 + \int_0^t {\bm b}(s, X_s) ds
+  \int_0^t{\bm \sigma}(s,  X_s) d  W_s \;\; \text{ with } X_0 \in \mathcal{D}, \,\, t\in[0,T],
\end{equation}
where $T>0$, $\mathcal{D}$ is a bounded domain in $\mathbb{R}^d$, ${\bm b}: [0,T] \times  \mathbb{R}^d \rightarrow \mathbb{R}^d$, ${\bm \sigma} :[0,T] \times \ \mathbb{R}^d \rightarrow \mathbb{R}^{d\times m}$ are drift and diffusion coefficients, respectively, and 
$W_s := (W_s^1, \ldots, W_s^m)^{\top}$ is a $m$-dimensional standard Brownian motion.
We assume that ${\bm b}$ and ${\bm \sigma}$ satisfy, for some constant $C$,
\begin{equation}
    |{\bm b}(t,{\bm x})| + |{\bm \sigma}(t,{\bm x})| \leq C(1+|{\bm x}|), \text{ for } {\bm x} \in \mathbb{R}^d, t\in [0,T],
\end{equation}
and  
\begin{equation}
   |{\bm b}(t,{\bm x}) - {\bm b}(t,{\bm y})| + |{\bm \sigma}(t,{\bm x}) - {\bm \sigma}(t,{\bm y})| \leq C|{\bm x}-{\bm y}|, \text{ for } {\bm x}, {\bm y} \in \mathbb{R}^d, t\in [0,T].
\end{equation}
For an initial state $X_0\in \mathcal{D}$, the SDE in Eq.~\eqref{eq_sde} has a unique time continuous solution $X_t$ \cite{oksendal2003stochastic}. 
As the state $X_t$ in Eq.~\eqref{eq_sde} depends on the initial condition $X_0$, we will write $X_t$ in the conditional form $X_t|X_0$ when we need to emphasize this dependence.

In applications one is typically interested in evaluating quantities of interest, ${\rm QoI}$, of the form \begin{equation}\label{eq:qoi}
{\rm QoI} \, = \int_{\mathbb{R}^d} F({\bm x_t})\, p_{X_t} ({\bm x_t}) d{\bm x}_t \, ,
\end{equation}
where $F$ denotes a physical property of the system, and $p_{X_t}$ is the probability density of the final state. Because of the dependence of the final state on the initial condition, normalizing flow strategies focusing on the distribution $p_{X_t} ({\bm x_t})$, e.g., Refs.~\cite{both2019temporal,lu2022learning,feng2021solving},
might not be efficient since every time the initial condition changes, the evaluation of ${\rm QoI}$ requires re-training of the neural network for  $p_{X_t} ({\bm x_t})$ corresponding to the new initial condition. 

To circumvent this problem we propose here a new method that focuses on the use of normalizing flows to create a surrogate model for the 
probability density $p_{X_t | X_0} ({\bm x_t|{\bm x_0}})$, 
which gives the transition probability of being at ${\bm x_t}$ stating at ${\bm x_0}$. Once a normalizing flow model is trained for $p_{X_t | X_0} ({\bm x_t|{\bm x_0}})$, the computation of the ${\rm QoI}$ reduces to the direct evaluation of the integral
\begin{equation}\label{eq:qoi1}
{\rm QoI} = \int_{\mathbb{R}^d} \int_{\mathbb{R}^d} F({\bm x_t})\, p_{X_t | X_0} ({\bm x_t}|{\bm x}_0) p_{X_0}({\bm x}_0)  d{\bm x_t} d{\bm x}_0 \, ,
\end{equation}
where $p_{X_0}({\bm x}_0)$ denotes the probability density of the initial condition. 
 In this approach the computational cost of evaluating ${\rm QoI}$ for different initial conditions reduces to the cost of performing a quadrature without a retraining overhead. 
If $\{{\bm x}_0^{(m)}\}$ for $i=1\, \ldots M$ is a set of $M$-samples drawn from $p_{X_0}({\bm x}_0)$, and $\{{\bm x}_t^{(n)}\}$ 
 is the corresponding set of $N$-samples drawn from $p_{X_t | X_0} ({\bm x_t}|{\bm x}_0)$, then the quadrature can be approximated as
\begin{equation}\label{eq:qoi2}
{\rm QoI} \approx \frac{1}{M N}\sum_{m=1}^M \sum_{n=1}^N F\left({\bm x}_t^{(n)}| \bm x_0^{(m)}\right),
\end{equation}

\section{The pseudo-reversible normalizing flow (PR-NF)}\label{sec:prnn}


A normalizing flow model can be viewed as a nonlinear transformation that maps an unknown target random variable, denoted by $X$, to a standard random variable, e.g., the standard Gaussian, denoted by $Z$. Because we intend to take into account the initial condition $X_0$ in our normalizing flow model, we define the target random variable $X$ as 
\begin{equation}\label{eq:y}
    X := ( X_0, X_t|X_0) \in \mathbb{R}^{2d}.
\end{equation}
A normalizing flow model includes a forward transformation mapping $X$ to $Z$ and an inverse transformation mapping $Z$ to $X$, i.e., 
\begin{equation}\label{eq:NF}
    Z = \bm h(X; \theta_h)\;\; \text{ and }\;\; \widehat{X} = \bm g(Z; \theta_g),
\end{equation}
where $\theta_h$, $\theta_g$ are parameters of the transformations and $\widehat{X} = X$ if $\bm g = \bm h^{-1}$. 
The function ${\bm h}$ moves (or {flows}) in the {normalizing} direction, i.e., from an unknown distribution to a standard distribution $p_{Z}$. The tradition of defining ${Z}$ as the standard normal random variable $\mathcal{N}(0,\mathbb{I}_{2d})$ gives rise to the name ``normalizing flow".
Let $p_{X}(\bm x)$ and $p_Z(\bm z)$ denote the PDF of $X$ and $Z$, respectively. 
%
The relation between the two PDFs can be obtained by the change of variables formula, i.e., 
\begin{equation}
\begin{aligned}
p_{X}({\bm x}) = p_{Z}({\bm z}) \left| \frac{\partial {\bm z}}{\partial {\bm x}} \right| &= p_{Z}({\bm h}({\bm x}))
|{\rm det} {\mathbf{J}_{h}}({\bm x})|,\\
\end{aligned}
\end{equation}
where ${\rm det} {\mathbf{J}_{\bm h}}({\bm x})$ is the determinant of the Jacobian matrix $\mathbf{J}_{\bm h}$ of the forward map $\bm h(\cdot)$.


\subsection{The PR-NF model's architecture}\label{sec:arch}
The PR-NF model uses the pseudo-reversible neural network architecture \cite{keller2021self, teng2021level, rippel2013high}, i.e., $\bm{g} \approx \bm{h}^{-1}$ in Eq.~\eqref{eq:NF}, to replace the exact reversible architectures used in most existing normalizing flow models, e.g., MAF \cite{papamakarios2017masked}, Real NVPs \cite{dinh2016density}. The typical exact reversible architectures rely on functions with either triangular Jacobians or Lipschitz continuity, where the Jacobian determinant can be efficiently approximated. However,
the relaxation of the reversibility condition increases the flexibility of the model design. Specifically, we define $\bm{h}(\cdot; \theta_h)$ and $\bm{g}(\cdot; \theta_g)$ as two independent fully-connected neural networks, and the pseudo-reversibility is imposed by adding a soft constraint $\|\widehat{X} - X\|_2^2$ to the loss function. This architecture is different from autoencoders because the size of the bottleneck, i.e., the dimension of $Z$, is the same as the dimension of $X$ and $\widehat{X}$. Figure~\ref{fig_nf} illustrates the performance of the PR-NF on a standard two-dimensional dataset. We observe that the left panel (i.e., the true distribution) and the right panel (i.e., the PR-NF generated distribution) have a good agreement, meaning the relaxation of the reversibility does not cause a significant error. 
\begin{figure}[h!]
    \centering
  {\includegraphics[width=0.9\textwidth]{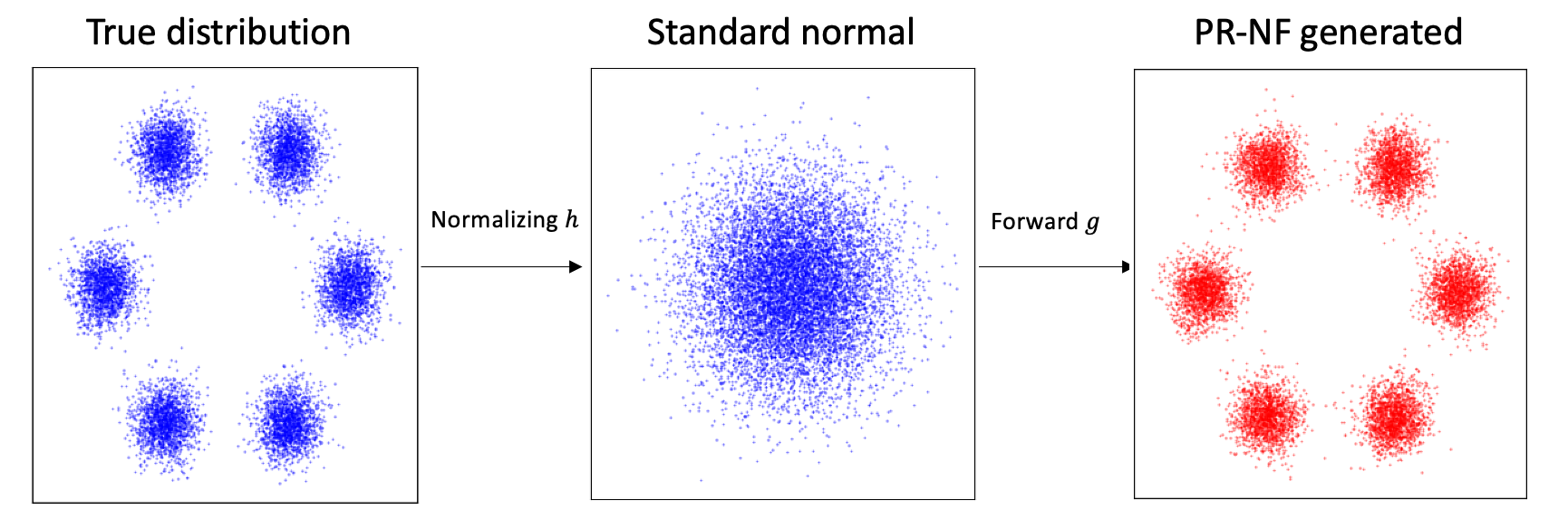}}
  \vspace{-0.15cm}
    \caption{Illustration of the PR-NF's performance. The function ${\bm h}$ maps the true distribution (left) to the standard normal distribution (middle), and the function $\bm g$ generates new samples of the true distribution. We observe that the left panel and the right panel have a good agreement, meaning the relaxation of the reversibility does not cause a significant error.}
    \label{fig_nf}
    \vspace{-0.3cm}
\end{figure}

The novelty of the PR-NF model is to learn the conditional distribution in Eq.~\eqref{eq:qoi1}, such that we can use the trained PR-NF model to compute the QoI in Eq.~\eqref{eq:qoi1} for arbitrary initial conditions, $p_{X_0}(\bm x_0)$, without re-training. The key ingredient is to include the initial state $X_0$ into the input of the PR-NF model, i.e., Eq.~\eqref{eq:y}. The network architecture is shown in Figure \ref{fig_nf_x0}. Specifically, the mapping $\bm h(\cdot)$ consists of two components, i.e., $\bm h(\bm x_0, \bm x_t|\bm x_0) = (\bm h_0(\bm x_0), \bm h_1(\bm x_0, \bm x_t|\bm x_0; \theta_h))$, where $\bm h_1(\bm x_0, \bm x_t|\bm x_0; \theta_h): \mathbb{R}^{2d} \mapsto \mathbb{R}^d$ is a fully connected neural network with tuning parameter $\theta_h$, and $\bm h_0(\bm x_0): \mathbb{R}^d \mapsto \mathbb{R}^d$ is an identity mapping depending only on $\bm x_0$. We emphasize that it is critical to include $\bm x_0$ as one input of $\bm h_1$ to characterize the conditional distribution $p_{X_t|X_0}(\bm x_t|\bm x_0)$. We denote by $\bm z_0$ the output of $\bm h_0(\bm x_0)$ and $\bm z_t$ the output of $\bm h_1(\bm x_0, \bm x_t|\bm x_0; \theta_h)$. 
The inverse map $\bm g$ has a similar structure as the forward map, i.e., $\bm g(\bm z_0, \bm z_t) = (\bm g_0(\bm z_0), \bm g_1(\bm z_0, \bm z_t; \theta_g))$, where $\bm g_0$ is an identity map and $\bm g_1$ is a fully connected neural network with tuning parameter $\theta_g$. We denote by $\widehat{\bm x}_0$ and $\widehat{\bm x}_t$ the outputs of $\bm g_0$ and $\bm g_1$, respectively. Note that $\widehat{\bm x}_0 = \bm z_0 = \bm x_0$ because $\bm h_0$ and $\bm g_0$ are identity maps. 
%
%
\begin{figure}[h!]
    \centering
  {\includegraphics[width=0.7\textwidth]{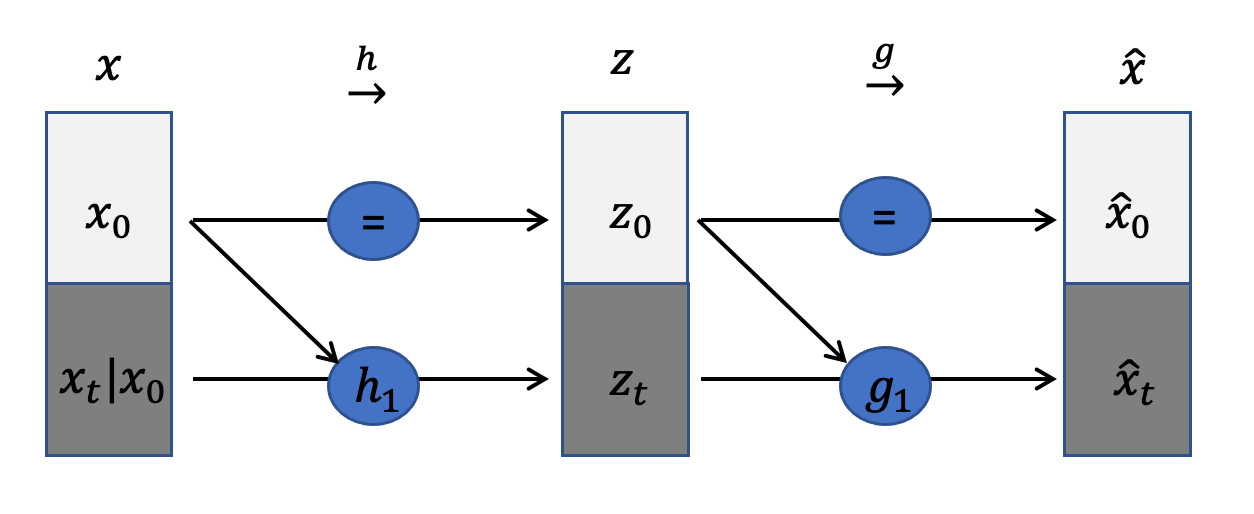}}
  \vspace{-0.3cm}
    \caption{The network architecture of the proposed PR-NF model.}
    \label{fig_nf_x0}
    \vspace{-0.3cm}
\end{figure}

\subsection{The loss function}\label{sec:loss}
The loss function is defined based on a training data set, i.e.,
\begin{equation}\label{eq:train}
\mathcal{V}_{\rm train} = \left\{ {\bm x}^{(n)}\right\}_{n=1}^{N} = \left\{\bm x_0^{(n)},  \bm x_t^{(n)}|\bm x_0^{(n)}\right\}_{n=1}^{N},
\end{equation}
where $\{\bm x_0^{(n)}\}_{n=1}^N$ is generated from the uniform distribution over the bounded domain $\mathcal{D}$ in Eq.~\eqref{eq_sde}. For each initial state $\bm x_0^{(n)}$, we numerically solve the SDE in Eq.~\eqref{eq_sde} to obtain the sample $\bm x_t^{(n)}|\bm x_0^{(n)}$. The loss function consists of two components, i.e., 
\begin{equation}\label{eq_loss}
\mathcal{L} = \mathcal{L}_1 + \lambda \mathcal{L}_2,
\end{equation}
where $\mathcal{L}_1$ is the negative log-likelihood loss defined by
\begin{equation}\label{eq:loglike}
 \mathcal{L}_1 = - \frac{1}{N}\sum_{n=1}^{N} \left({\rm log}p_{Z_t}({\bm h}_1({\bm x}^{(n)}; \theta_{h})) +  {\rm log} \left|{\rm det} {\bf J}_{\bm h} ({\bm x}^{(n)}; \theta_{h})\right|\right),
\end{equation}
with $p_{Z_t}(\cdot)$ the probability density function of the standard normal distribution, and $\mathcal{L}_2$ is the pseudo-reversibility loss that measures the difference between ${\bm x}$ and $\widehat{\bm x}$, i.e.,
\begin{equation}
    \mathcal{L}_2 = \frac{1}{N} \sum_{n=1}^{N} \left( \left\|{\bm x}^{(n)} - {\bm g} ( {\bm h}({\bm x}^{(n)} ; \theta_{h}); \theta_{g})\right\|_2^2  + \left|\det {\bf J}_{\bm g}({\bm h}({\bm x}^{(n)})) \det {\bf J}_{\bm h}({\bm x}^{(n)}) - 1 \right| \right),
\end{equation}
and $\lambda$ is a hyperparameter that will be discussed in Section \ref{sec:hyper}. 

The Jacobian determinant in $\mathcal{L}_1$ can be written as  
\begin{equation}\label{eq_Jac_reduce}
    \left|{\rm det} {\mathbf{J}_{\bm h}}({\bm x})\right| = \left|\text{det}\begin{pmatrix}
\mathbb{I}_d, \quad  0\\
\frac{\partial \bm z_t}{\partial \bm x_0}, \frac{\partial \bm z_t}{\partial \bm x_t}
\end{pmatrix} \right| = \left|\text{det}\left(\frac{\partial \bm z_t}{\partial \bm x_t}\right)\right|,
\end{equation}
where $\mathbb{I}_d$ is a $d\times d$ identity matrix. 
The simplification in Eq.~\eqref{eq_Jac_reduce} is based on the fact ${\bm h_0}$ is an identity map with respect to $\bm x_0$. Thus, although we doubled the size of the input of the PR-NF model, the size of the Jacobian matrix is still $d \times d$. The proposed PR-NF has $\mathcal{O}(d^3)$ complexity in Jacobian determinant computation. However, we observe that it is not a bottleneck with GPU-accelerated linear algebra libraries in Pytorch and TensorFlow, 
especially for a wide range of physical processes, i.e., the examples in Section \ref{sec:ex}, in the phase space with dimension up to six. 




\subsection{Hyperparameter tuning}\label{sec:hyper}

The hyperparameter $\lambda$ in Eq.~\eqref{eq_loss} determines the importance of the reversibility loss $\mathcal{L}_2$. Thus, a hyperparameter tuning is needed to achieve a good performance. In this work, we will perform grid search and use the cross-entropy as a metric to find the best value for $\lambda$. Specifically, we will generate a set of candidates, denoted by $\{\lambda_j\}_{j=1}^J$. For each $\lambda_j$, we train one PR-NF model, denoted by $(\bm h^{(j)}(\bm x), \bm g^{(j)}(\bm z))$. Then, we generate $M$ samples of 
$\widehat{\bm x}$ by running the inverse mapping, i.e., 
\begin{equation}\label{eq:sample}
\left\{ \widehat{\bm x}^{(j,m)} = \bm g^{(j)}(\bm z^{(m)}), m = 1, \ldots, M \right\}, 
\end{equation}
where $\{\bm z^{(m)}\}_{m=1}^M = \{\bm z_0^{(m)}, \bm z_t^{(m)}\}_{m=1}^M$ with $\bm z_0^{(m)}$ generated uniformly from $\mathcal{D}$ and $\bm z_t^{(m)}$ generated from the standard normal distribution. Because it does not require solving the SDE in Eq.~\eqref{eq_sde}, we can generate a large number of samples. Next, we build a kernel density estimator using the samples in Eq.~\eqref{eq:sample}, and compute the cross-entropy by
\begin{equation}\label{eq:cross}
    H(\lambda_j) = - \frac{1}{N} \sum_{n=1}^N \log(p_{\rm KDE}( \bm x_t^{(n)})),
\end{equation}
where $p_{\rm KDE}$ is the kernel density estimator based on the samples in Eq.~\eqref{eq:sample} and $\{\bm x_t^{(n)}\}_{n=1}^N$ is the training data set in Eq.~\eqref{eq:train}. The optimal hyperparameter $\lambda$ is obtained by minimizing the cross-entropy, i.e., $\lambda = \text{argmin}\, H(\lambda_j)$.

\subsection{Discussion on the computational cost}
The algorithmic cost of sampling methods consists of two parts, the ``offline cost" and the ``online cost''. The offline cost of an algorithm represents the running time for the training process. It is determined by the architecture of the neural network, the optimizer (numerical scheme), the size of the training dataset and the number of training epochs (``time" steps). 
An epoch in machine learning means one complete pass of the training dataset through the algorithm. We choose the number of epochs such that the loss function stably attains the minimum.
The Monte Carlo method for sampling has no training process. However, it needs to be run repeatedly for each new initial condition. The PR-NF model is a generative model where the offline cost accounts for most of the 
 total algorithmic cost. However, the offline cost is a one-time cost (no additional training is needed for future simulations). We denote the offline cost of the PR-NF method as $C_{\rm offline}$. It is easy to utilize the GPU architecture to accelerate the computation for the training process under the PyTorch machine learning framework.

The online cost is the computational cost for sampling. Although the Monte Carlo method is conceptually simple and algorithmically straightforward, the associated computational cost  can be staggeringly high. In general, the method requires many samples to get a good approximation, which may incur an arbitrarily large total runtime if the processing time of a single sample is high \cite{shonkwiler2009explorations}. In contrast, the PR-NF model is a transformation that maps samples from the standard normal distribution to the distribution of interest. For an initial distribution, the computational cost of the MC method is significantly greater than the online cost of the PR-NF method because it involves the sampling and numerical integration of the SDE in Eq.~\eqref{eq_sde}. The computational cost depends on how many samples need to generate. We denote $C_{\rm MC}$ and $C_{\rm PR-NF}$ as the computational cost of an initial distribution for MC and PR-NF methods, respectively. We have $C_{\rm MC} \gg C_{\rm PR-NF}$. 
To test $N_{\rm init}$ different initial distributions, the cost of the PR-NF method $\mathcal{O}(C_{\rm offline} + C_{\rm PR-NF} \times N_{\rm init})$ is cheaper than the MC method $\mathcal{O}(C_{\rm MC}  \times N_{\rm init})$ when $N_{\rm init}$ is large.

\section{Convergence analysis of the PR-NF model}\label{sec:analysis}
This section provides the convergence analysis of the pseudo-reversible normalizing flow (PR-NF) model in Section \ref{sec:prnn}. For notational simplicity, the target random variable and its probability density function are denoted by $X$ and $p_X({\bm x})$, respectively. Here, we limit our attention to single-hidden-layer neural networks. The work in \cite{trans} shows that for any pair of well-behaved\footnote {A well-behaved distribution $p_X({\bm x})$ means that $p_X({\bm x})>0$ for all ${\bm x}\in \mathbb{R}^d$, and all conditional probabilities ${\rm Pr}(X_i \le x_i | {\bm x}_{<i})$ are differentiable, for $i = 1, 2, \ldots, d$.} distributions $p_X({\bm x})$ and $p_Z({\bm z})$, there exists a diffeomorphism $\bm z = \bm f(\bm x) = (f_1(\bm x), \ldots, f_d(\bm x))$ that can transform $p_X({\bm x})$ to $p_Z({\bm z})$, i.e., 
\begin{equation}\label{eq_denp}
    p_{X}(\bm x) = p_Z(\bm f(\bm x))|\det {\bf J}_{\bm f}(\bm x)|.
\end{equation}
The proposed PR-NF model can be viewed as an approximation of the diffeomorphism, where $\bm h(\bm x)$ and $\bm g(\bm z)$ in Eq.~\eqref{eq:NF} approximate $\bm f$ and $\bm f^{-1}$, respectively. 

We study the convergence of the PR-NF model in three stages. Section \ref{sec:exist} shows the existence of a convergent neural network approximations to the target diffeomorphism $\bm f$ by exploiting the universal approximation theorm of fully connected neural networks. Section \ref{sec:l1l2} shows that the convergence of the loss functions $\mathcal{L}_1$ and $\mathcal{L}_2$ in Eq.~\eqref{eq_loss} is a necessary condition to obtain the convergent approximation discussed in Section \ref{sec:exist}, which verifies that appropriateness of the  loss function used to train the PR-NF model. Section \ref{sec:kl} shows that the convergence of $\mathcal{L}_1$ and $\mathcal{L}_2$ plus some mild assumptions on the $\bm h$ and $\bm g$ is sufficient to ensure the convergence of the KL divergence between the true and the approximate probability density function.



\subsection{Existence of a convergent approximation to the diffeomorphism}\label{sec:exist}
%
%
\begin{assum}\label{ass1}
We assume that the determinant of the Jacobian matrix and any partial derivative of $\bm f \in C^1(\mathbb{R}^d) $ are bounded, i.e., 
 \begin{equation}\label{assum_1}
     \left|\frac{\partial f_i(\bm x)}{\partial x_j}\right| < A_1 < \infty,
    \quad |\det \mathbf{J}_{\bm f}(\bm x)| > A_2 > 0,
 \end{equation}
where $A_1$ and $A_2$ are positive constants and $i,\ j \in \{1,\ldots,d\}$. 
Additionally, the probability density function $p_X(\bm x)$ is bounded and all the conditional 
probabilities are differentiable. There exists a positive constant $K$ such that  
 \begin{equation}\label{assum_3}
     p_X(\bm x) < A_3\exp(- \alpha\|\bm x\|^2)\; \text{ for }\; \|\bm x\| > K,
 \end{equation}
 where $\alpha>0$ and $A_3>0$ are positive constants and $\|\bm x\|$ denotes the $l^2$ norm.
\end{assum}

Under Assumption \ref{ass1}, Lemma \ref{UAT} and Lemma \ref{lem_inv} demonstrate the existence and reversibility of convergent flows $\bm h$ and $\bm g$, respectively. To proceed, we introduce some multivariate notations: $\mathbb{Z}_+^d$ denote
the lattice of nonnegative multi-integers in  $\mathbb{R}^d$. 
  For $\mathbf{m} = (m_1,\ldots,m_d) \in \mathbb{Z}_+^d$, we set $|\mathbf{m}|=m_1+\ldots+m_d$, $\bm{x}^\mathbf{m} = x_1^{m_1}\cdots x_d^{m_d}$, and
    $D^\mathbf{m} = \frac{\partial^{|\mathbf{m}|}}{\partial x_1^{m_1}\cdots \partial x_d^{m_d}}$. We also define
\[
C^k(\Omega) := \big\{{\bm f} : D^\mathbf{m}f_i \in C(\Omega)\ \text{for all}\ i\in \{1,\ldots,d\}, \, \text{and} \, |\mathbf{m}| \le k\big\}.
\]

\begin{lemma}[Theorem 4.1 in \cite{UAT}]\label{UAT}
For any given $\varepsilon>0$, there exists a single-hidden-layer neural network $\bm h = (h_1, \ldots, h_d)$ with sufficient number of neurons to approximate a function $\bm f\in C^k(\mathbb{R}^d)$ in a compact set $\Omega\subset \mathbb{R}^d$
such that 
\begin{equation}\label{eq_lemmaUAT}
    \max_{{\bm x}\in\Omega} |D^\mathbf{m} f_i(\bm x)-D^\mathbf{m} h_i (\bm x)|<\varepsilon,
\end{equation}
for all $i\in \{1,\ldots,d\}$ and $\mathbf{m}\in \mathbb{Z}_+^d$ with $|\mathbf{m}|\le k$.
\end{lemma}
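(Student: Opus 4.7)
The statement is the classical simultaneous universal approximation theorem for single-hidden-layer neural networks (cf.\ Pinkus and Hornik), asserting density of shallow networks in the $C^k$ topology on compact sets. My plan is to reduce the claim, in three steps, to the density of polynomials in $C^k(\Omega)$ and to the density of a single-variable network class in $C^k$ on a bounded interval.

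First, I would fix a smooth non-polynomial activation $\sigma\in C^\infty(\mathbb{R})$ (e.g.\ a sigmoidal or tanh activation); such smoothness is needed because we want to control derivatives $D^{\mathbf{m}}h_i$ up to order $k$, not merely $h_i$ itself. Since a single-hidden-layer network has the form $h_i(\bm x)=\sum_{j=1}^{N}c_{ij}\,\sigma(\bm w_j\cdot\bm x+b_j)$, its partial derivatives are $D^{\mathbf{m}}h_i(\bm x)=\sum_j c_{ij}\,(\bm w_j)^{\mathbf{m}}\sigma^{(|\mathbf{m}|)}(\bm w_j\cdot \bm x+b_j)$, so a $C^k$-estimate on a compact $\Omega$ is available as soon as the $\bm w_j$ remain bounded. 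Because each coordinate $f_i$ can be treated separately, it suffices to prove the scalar case $d_{\text{out}}=1$ and then concatenate the component networks.

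Second, I would establish that the closure of these networks in the $C^k(\Omega)$ topology contains every polynomial. The standard trick is to consider, for fixed direction $\bm w$ and bias $b$, the function $\lambda\mapsto\sigma(\lambda\,\bm w\cdot\bm x+b)$ and to differentiate in the parameter $\lambda$: its $j$-th derivative at $\lambda=0$ equals $(\bm w\cdot\bm x)^{j}\sigma^{(j)}(b)$. Since $\sigma$ is non-polynomial, for every $j$ one can pick $b=b_j$ with $\sigma^{(j)}(b_j)\ne0$, and each parameter derivative can itself be approximated in $C^k(\Omega)$ by finite differences of networks with the same activation. Taking products of linear forms $\bm w\cdot\bm x$ via polarization then generates every monomial $\bm x^{\mathbf{m}}$, and hence every polynomial, as a $C^k$-limit of single-hidden-layer networks. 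The Stone--Weierstrass theorem (or the standard density of polynomials in $C^k(\Omega)$ via mollification on an enlarged compact set) then shows that any $f_i\in C^k(\mathbb{R}^d)$ can be approximated on $\Omega$ simultaneously with all its derivatives of order $\le k$ by such networks, which yields \eqref{eq_lemmaUAT} for a single coordinate; concatenating gives the vector-valued statement.

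The main obstacle, and the step that distinguishes this from the $C^0$ universal approximation theorem, is upgrading uniform approximation of the function itself to uniform approximation of all derivatives up to order $k$. This is where the smoothness of $\sigma$ and the boundedness of the $\bm w_j$ are essential: one must argue that the $C^k$-closure is preserved when passing from the parameter-derivative trick (which produces monomials) to general polynomials, and then to $C^k$ functions. A clean way to handle this is to carry out the entire approximation argument in the Sobolev-type norm $\sum_{|\mathbf{m}|\le k}\|D^{\mathbf{m}}\cdot\|_{L^\infty(\Omega)}$ from the start, verifying at each stage (network $\to$ monomials $\to$ polynomials $\to$ $C^k$ function) that the closure operation is taken in this stronger norm. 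Once this is in place, the result of the lemma follows directly.
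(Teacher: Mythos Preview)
The paper does not give its own proof of this lemma: it is stated as a citation of a known result (Theorem~4.1 in the reference~\cite{UAT}) and used as a black box in the subsequent arguments. So there is nothing to compare your proposal against in the paper itself.

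That said, your sketch is a reasonable outline of the standard proof of simultaneous $C^k$ approximation by shallow networks, essentially the argument due to Hornik and the exposition in Pinkus's survey. The reduction to the scalar case, the requirement that the activation be $C^\infty$ and non-polynomial, the parameter-differentiation trick producing $(\bm w\cdot\bm x)^j\sigma^{(j)}(b)$, and the passage from powers of linear forms to all monomials via polarization are the correct ingredients. Your identification of the main delicate point---that every closure step must be carried out in the $C^k(\Omega)$ seminorm $\max_{|\mathbf m|\le k}\|D^{\mathbf m}\cdot\|_{L^\infty(\Omega)}$, which in particular requires that the finite-difference approximations of parameter derivatives converge in this stronger norm---is exactly right. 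One small point to tighten: when you invoke ``Stone--Weierstrass'' for the final density step, the theorem as usually stated applies only in $C^0$; the $C^k$ density of polynomials on a compact set is better justified directly, e.g.\ by convolving $f_i$ with a polynomial mollifier (Bernstein or truncated Gaussian) on a slightly enlarged compact neighborhood, which gives uniform convergence of all derivatives up to order $k$.
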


\begin{lemma}\label{lem_inv}
Under Assumption \ref{ass1} and Lemma \ref{UAT}, the flow $\bm h$ approximating $\bm f \in C^k(\mathbb{R}^d)$ is invertible for sufficiently small $\varepsilon > 0$.
%
\end{lemma}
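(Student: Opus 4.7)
The plan is to combine the $C^1$-approximation result of Lemma \ref{UAT} with the lower bound on $|\det \mathbf{J}_{\bm f}|$ from Assumption \ref{ass1} to control $\det \mathbf{J}_{\bm h}$, and then to upgrade local invertibility (via the Inverse Function Theorem) to global injectivity on the compact set $\Omega$ by exploiting that $\bm f$ itself is a diffeomorphism. First I would apply Lemma \ref{UAT} with $k \geq 1$ on the compact set $\Omega$ to get a single-hidden-layer network $\bm h$ with
\[
\max_{\bm x \in \Omega} |\partial_{x_j} f_i(\bm x) - \partial_{x_j} h_i(\bm x)| < \varepsilon \quad \text{and} \quad \max_{\bm x \in \Omega}\|\bm f(\bm x) - \bm h(\bm x)\| < \varepsilon
\]
for all $i,j \in \{1,\ldots,d\}$. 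In particular, every entry of $\mathbf{J}_{\bm h}(\bm x)$ lies within $\varepsilon$ of the corresponding entry of $\mathbf{J}_{\bm f}(\bm x)$, whose entries are themselves bounded by $A_1$ by Assumption \ref{ass1}.

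Next, since the determinant is a polynomial of degree $d$ in the matrix entries, expanding via the Leibniz formula yields a bound of the form
\[
|\det \mathbf{J}_{\bm h}(\bm x) - \det \mathbf{J}_{\bm f}(\bm x)| \le C(d, A_1)\,\varepsilon \quad \text{for all } \bm x \in \Omega,
\]
with $C(d, A_1)$ depending only on $d$ and $A_1$. Combining this with $|\det \mathbf{J}_{\bm f}(\bm x)| > A_2 > 0$ and choosing $\varepsilon < A_2 / (2 C(d, A_1))$, I get $|\det \mathbf{J}_{\bm h}(\bm x)| > A_2/2 > 0$ uniformly on $\Omega$. The Inverse Function Theorem then gives local $C^1$-invertibility of $\bm h$ at every $\bm x \in \Omega$, and by a compactness argument one can extract a uniform radius $r > 0$ (depending only on $A_1$, $A_2$, $d$) such that $\bm h$ is injective on every ball $B_r(\bm x) \cap \Omega$.

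The main obstacle is promoting local injectivity to global injectivity on $\Omega$. The plan here is to use that $\bm f : \Omega \to \bm f(\Omega)$ is a diffeomorphism, so $\bm f^{-1}$ is Lipschitz on $\bm f(\Omega)$ with some constant $L > 0$. Suppose $\bm h(\bm x_1) = \bm h(\bm x_2)$ for distinct $\bm x_1, \bm x_2 \in \Omega$. Then by the triangle inequality,
\[
\|\bm f(\bm x_1) - \bm f(\bm x_2)\| \le \|\bm f(\bm x_1) - \bm h(\bm x_1)\| + \|\bm h(\bm x_2) - \bm f(\bm x_2)\| \le 2\varepsilon,
\]
and the Lipschitz bound gives $\|\bm x_1 - \bm x_2\| \le 2 L \varepsilon$. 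Taking $\varepsilon$ even smaller so that $2 L \varepsilon < r$ forces $\bm x_1$ and $\bm x_2$ to lie in a common ball on which $\bm h$ is already known to be injective, a contradiction. Hence $\bm h$ is injective, and together with the nonvanishing Jacobian it is a $C^1$-diffeomorphism onto its image for all sufficiently small $\varepsilon$, completing the proof.
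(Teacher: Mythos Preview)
Your first two steps---bounding $|\det\mathbf{J}_{\bm h}-\det\mathbf{J}_{\bm f}|$ via the Leibniz expansion and the $C^1$-closeness from Lemma~\ref{UAT}, then choosing $\varepsilon<A_2/(2C)$ to force $|\det\mathbf{J}_{\bm h}|>A_2/2$---are exactly what the paper does; the constants and the structure of the estimate are the same.

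Where you diverge is after the Inverse Function Theorem. The paper simply writes ``By the inverse function theorem, $\bm h$ is invertible in $\Omega$'' and stops, treating the nonvanishing of $\det\mathbf{J}_{\bm h}$ on $\Omega$ as sufficient. You correctly recognize that the IFT only yields local invertibility, and you supply the missing passage to global injectivity: a uniform local-injectivity radius $r$ from compactness and the uniform Jacobian bounds, combined with the Lipschitz property of $\bm f^{-1}$ to force any hypothetical collision $\bm h(\bm x_1)=\bm h(\bm x_2)$ into a single $r$-ball. This is a genuine strengthening of the paper's argument; your version closes a gap the paper leaves implicit. The only point to tighten is the claim of a uniform radius $r$: you should note that it follows from a quantitative form of the IFT (the radius depends on a uniform bound on $\|\mathbf{J}_{\bm h}(\bm x)^{-1}\|$, which you have from $|\det\mathbf{J}_{\bm h}|>A_2/2$ together with the entrywise bound $|\partial_{x_j}h_i|<A_1+\varepsilon$), or alternatively replace that step by the one-line mean-value estimate $\bm h(\bm x_2)-\bm h(\bm x_1)=\int_0^1 \mathbf{J}_{\bm h}(\bm x_1+t(\bm x_2-\bm x_1))(\bm x_2-\bm x_1)\,dt$ after reducing to $\bm f=\mathrm{id}$ on a convex neighborhood.
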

\begin{proof}
Let $\mathcal{S}_d$ be the set of all permutation of $\{1,\ldots,d\}$, we have
\begin{equation}\label{thm1_jf-jh}
|\det {\bf J}_{\bm f}(\bm x) - \det {\bf J}_{\bm h}(\bm x)|
=\left| \sum_{\sigma\in \mathcal{S}_d}\left( {\rm sgn}(\sigma) \left(\prod_{i = 1}^d \frac{\partial f_i}{\partial x_{\sigma_i}} -\prod_{i = 1}^d \frac{\partial h_i}{\partial x_{\sigma_i}} \right)\right) \right|
< C \varepsilon
\end{equation}
for any $\bm x \in \Omega$, where the last inequality holds by Lemma \ref{UAT} with $k = 1$. According to Eq.~\eqref{assum_1}, we have
\begin{equation}\label{thm1_jf}
    A_2 < |\det {\bf J}_{\bm f}(\bm x)| = \left| \sum_{\sigma\in \mathcal{S}_d}\left( {\rm sgn}(\sigma)\prod_{i = 1}^d \frac{\partial f_i}{\partial x_{\sigma_i}}\right)\right|<CA_1^d.
\end{equation}  
We choose $\varepsilon<\frac{A_2}{2C}$ such that 
\begin{align}
    &|\det {\bf J}_{\bm h}(\bm x)| \le |\det {\bf J}_{\bm f}(\bm x)|+|\det {\bf J}_{\bm f}(\bm x) - \det {\bf J}_{\bm h}(\bm x)|<\frac{A_2}{2}+CA_1^d\label{thm1_jhup}, \\
    &|\det {\bf J}_{\bm h}(\bm x)| \ge |\det {\bf J}_{\bm f}(\bm x)|-|\det {\bf J}_{\bm f}(\bm x) - \det {\bf J}_{\bm h}(\bm x)|>\frac{A_2}{2}>0\label{thm1_jhlow}.
\end{align}
By the inverse function theorem, $\bm h$ is invertible in $\Omega$. 
\end{proof}

Similarly, we can prove the  invertibility of ${\bm g}$ for the function ${\bm f}^{-1}$ with Eq.~\eqref{eq_lemmaUAT}. So far, we proved the existence of convergent neural network $\bm h$ and $\bm g$ to approximate $\bm f$ and $\bm f^{-1}$, respectively.

\subsection{Convergence of the loss function}\label{sec:l1l2}
We now prove that 
the convergence of the loss functions $\mathcal{L}_1$ and $\mathcal{L}_2$ in Eq.~\eqref{eq_loss} is a necessary condition to obtain the convergent approximation discussed in Section \ref{sec:exist}. To proceed, we define two auxiliary 
random variables 
\begin{equation}\label{eq:xxx}
\widetilde{X} = {\bm h^{-1}}(Z)\; \text{ and } \; \widehat{X} = {\bm g}(Z),
\end{equation}
where $Z$ follows the standard normal distribution. 
Using the change of variables formula, the probability density functions of  $\widetilde{X}$ and $\widehat{X}$ are defined by
\begin{equation}\label{p_xtilde}
    p_{\widetilde X}(\bm x) = p_Z(\bm h (\bm x))|\det {\bf J}_{\bm h}(\bm x)|, \quad p_{\widehat X}(\bm x) = p_Z(\bm g^{-1} (\bm x))|\det {\bf J}_{\bm g^{-1}}(\bm x)|.
\end{equation}

For simplicity, we consider the loss functions in the continuous form, i.e., 
\begin{equation}\label{con_l1}
\mathcal{L}_1 = -\int_{\mathbb{R}^d} p_X(\bm x) \log p_{\widetilde{X}}(\bm x) d\bm x,
\end{equation}
and 
\begin{equation}\label{con_l2}
\mathcal{L}_2 = \int_{\Omega} p_X(\bm x)\left(\|{\bm g}({\bm h}(\bm x))-\bm x\|^2 + |\det {\bf J}_{\bm g}({\bm h}(\bm x)) \det {\bf J}_{\bm h}(\bm x) - 1|\right)d\bm x,
\end{equation}
where $\Omega$ is a compact set. For the variables ${\widetilde X}$ and ${\widehat X}$, we have the following assumptions.
%



\begin{assum}\label{ass2}
We assume the density functions $p_{X}(\bm x)$ and $p_{\widetilde X}(\bm x)$ satisfy

 \begin{equation}\label{assum_4}
     A_4\exp(-\alpha \|\bm x\|^2) < \frac{p_X(\bm x)}{p_{\widetilde{X}}(\bm x)} < A_4\exp(\alpha \|\bm x\|^2),
 \end{equation}  
 where $\alpha$ and $A_4$ are positive constants. 
\end{assum}
Lemma \ref{Thm1} and Theorems \ref{Thm2}, \ref{Thm3} below will demonstrate that both $\mathcal{L}_1$ and $\mathcal{L}_2$ will approach the optimal constants in this case, with a difference between the loss and the optimal constant of the order  $O(\varepsilon)$. 
\begin{lemma}\label{Thm1}
For arbitrarily small $\varepsilon>0$, there exists a random variable $\widetilde{X}$  with the density function $p_{\widetilde X}(\bm x)$ defined in Eq.~\eqref{p_xtilde} such that
\begin{equation}\label{thm1_1}
|p_{X}(\bm x)- p_{\widetilde X}(\bm x)| < C \varepsilon, \,\,\text{ for }\,\,  \bm x \in \Omega,
\end{equation}
 where  $\Omega\subseteq\mathbb{R}^d$ is a compact set and $C>0$ is a constant.
\end{lemma}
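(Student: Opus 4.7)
\medskip

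\noindent\textbf{Proof proposal.} The plan is to reduce the pointwise error $|p_X(\bm x)-p_{\widetilde X}(\bm x)|$ on the compact set $\Omega$ to the approximation errors already controlled in Section \ref{sec:exist}. Using the change-of-variables identity \eqref{eq_denp} for the target diffeomorphism $\bm f$ and the definition \eqref{p_xtilde} of $p_{\widetilde X}$, I would write
\begin{equation*}
p_X(\bm x)-p_{\widetilde X}(\bm x) = p_Z(\bm f(\bm x))|\det\mathbf{J}_{\bm f}(\bm x)|-p_Z(\bm h(\bm x))|\det\mathbf{J}_{\bm h}(\bm x)|,
\end{equation*}
and then apply the triangle inequality by inserting and subtracting the cross-term $p_Z(\bm h(\bm x))|\det\mathbf{J}_{\bm f}(\bm x)|$ to obtain
\begin{equation*}
|p_X(\bm x)-p_{\widetilde X}(\bm x)| \le |\det\mathbf{J}_{\bm f}(\bm x)|\cdot|p_Z(\bm f(\bm x))-p_Z(\bm h(\bm x))| + p_Z(\bm h(\bm x))\cdot\bigl||\det\mathbf{J}_{\bm f}(\bm x)|-|\det\mathbf{J}_{\bm h}(\bm x)|\bigr|.
\end{equation*}

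For the first summand, I would use that $p_Z$ is the standard Gaussian density, hence globally Lipschitz with some constant $L_Z$. Combined with the component-wise bound $|f_i(\bm x)-h_i(\bm x)|<\varepsilon$ from Lemma \ref{UAT} (applied with $k=1$ so that derivatives are also controlled), this yields $|p_Z(\bm f(\bm x))-p_Z(\bm h(\bm x))|\le L_Z\sqrt{d}\,\varepsilon$. The prefactor $|\det\mathbf{J}_{\bm f}(\bm x)|$ is uniformly bounded on $\Omega$ by $CA_1^d$ via Eq.~\eqref{thm1_jf} of Assumption \ref{ass1}, so the first summand is $O(\varepsilon)$. For the second summand, the Jacobian-determinant gap is exactly the quantity estimated in Eq.~\eqref{thm1_jf-jh} of the proof of Lemma \ref{lem_inv}, giving the bound $C\varepsilon$, while $p_Z(\bm h(\bm x))\le(2\pi)^{-d/2}$ is a trivial uniform bound. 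Collecting both contributions yields the claimed $|p_X(\bm x)-p_{\widetilde X}(\bm x)|<C\varepsilon$ on $\Omega$.

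The existence part is then immediate: for any prescribed target tolerance, invoke Lemma \ref{UAT} with a sufficiently small parameter (small enough to simultaneously satisfy the invertibility threshold $\varepsilon<A_2/(2C)$ from Lemma \ref{lem_inv} and to drive the combined constant below the desired level), which produces a single-hidden-layer network $\bm h$, and define $\widetilde X:=\bm h^{-1}(Z)$ as in Eq.~\eqref{eq:xxx}. I do not expect a serious obstacle here: the only mild care needed is to make sure the constant $C$ absorbs $L_Z$, $(2\pi)^{-d/2}$, $CA_1^d$ and the combinatorial factor from Eq.~\eqref{thm1_jf-jh}, all of which depend on $d$ and $A_1,A_2$ but not on $\bm x\in\Omega$. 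The estimate is therefore purely local to $\Omega$, which is exactly what the statement claims; the tail behavior controlled by Eqs.~\eqref{assum_3} and \eqref{assum_4} is not needed for this lemma and will instead be used in the subsequent KL-divergence analysis of Theorems \ref{Thm2}--\ref{Thm3}.
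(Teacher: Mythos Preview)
Your proposal is correct and follows essentially the same approach as the paper: both arguments use the change-of-variables formulas for $p_X$ and $p_{\widetilde X}$, split the difference by inserting a cross-term, and then bound the two resulting summands using the Jacobian-determinant estimate \eqref{thm1_jf-jh} together with the smoothness of the Gaussian density (you phrase it as global Lipschitz continuity, the paper uses the mean value theorem with bounded gradient). The only cosmetic difference is that the paper inserts the cross-term $p_Z(\bm f(\bm x))|\det\mathbf{J}_{\bm h}(\bm x)|$ rather than your $p_Z(\bm h(\bm x))|\det\mathbf{J}_{\bm f}(\bm x)|$, which changes nothing substantive.
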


\begin{proof}

By Lemma \ref{UAT} with $k = 0$, for any $\bm x\in\Omega$ we have 
\begin{equation}\label{thm1_f-h}
    \|\bm f(\bm x)-\bm h (\bm x)\| = \sqrt{\sum_{i=1}^d |f_i(\bm x)-h_i (\bm x)|^2}<C\varepsilon.
\end{equation}
%
The standard normal random variable $Z \in \mathbb{R}^d$ has bounded gradient, i.e., $\|\nabla p_Z(\bm z)\|< C$ for any $\bm z\in\mathbb{R}^d$. Applying the mean value theorem, we have 
\begin{equation}\label{thm1_pzf-pzh}
        |p_Z(\bm f(\bm x)) - p_Z(\bm h(\bm x))|
        =|\nabla p_Z(\xi)\cdot(\bm f(\bm x)- \bm h(\bm x))|
        \le\|\nabla p_Z(\xi)\| \|\bm f(\bm x)- \bm h(\bm x)\|
        <C \varepsilon,
\end{equation}
where $\xi$ is between $\bm f(\bm x)$ and $\bm h(\bm x)$, $\varepsilon$ is given in Lemma \ref{UAT} and $C>0$ is a constant.
Because $Z$ has bounded density function, we can exploit Eqs.~\eqref{thm1_jf-jh}, \eqref{thm1_jhup}, \eqref{thm1_pzf-pzh} to derive 
\begin{equation}\label{thm1_8}
    \begin{aligned}
        & \left|p_{X}(\bm x)- p_{\widetilde X}(\bm x)\right|\\
        =&\, \left|p_Z(\bm f(\bm x))|\det {\bf J}_{\bm f}(\bm x)| - p_Z(\bm h (\bm x))|\det {\bf J}_{\bm h}(\bm x)| \right|\\
        =& \,\left|p_Z(\bm f(\bm x))(|\det {\bf J}_{\bm f}(\bm x)| - |\det {\bf J}_{\bm h}(\bm x)|)+|\det {\bf J}_{\bm h}(\bm x)|(p_Z(\bm f(\bm x)) - p_Z(\bm h(\bm x)))\right|\\
        \le& \, \left| p_Z(\bm f(\bm x))\right| \left| \det {\bf J}_{\bm f}(\bm x) - \det {\bf J}_{\bm h}(\bm x)\right| + \left| \det {\bf J}_{\bm h}(\bm x)\right|\left| p_Z(\bm f(\bm x)) - p_Z(\bm h(\bm x))\right|< C \varepsilon
    \end{aligned}  
\end{equation}
for any $\bm x\in \Omega$ which concludes the proof.
\end{proof}

Based on the results from Lemma \ref{Thm1}, we have the convergence of the continuous loss function $\mathcal{L}_1$ defined in Eq.~\eqref{con_l1}.
\begin{theorem}\label{Thm2}
For arbitrarily small $\varepsilon>0$, there exists a random variable $\widetilde{X}$ with the density function $p_{\widetilde X}(\bm x)$ in Eq.~\eqref{p_xtilde} such that $\mathcal{L}_1$ has the bound
\begin{equation}\label{thm2_bound}
    -\int_{\mathbb{R}^d} p_X(\bm x) \log p_X(\bm x) dx \le \mathcal{L}_1 <-\int_{\mathbb{R}^d} p_X(\bm x) \log p_X(\bm x) d\bm x +\varepsilon.
\end{equation}
\end{theorem}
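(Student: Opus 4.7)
The plan is to reformulate $\mathcal{L}_1 + \int p_X \log p_X\, d\bm x$ as the Kullback–Leibler divergence $\mathrm{KL}(p_X \,\|\, p_{\widetilde X}) = \int p_X \log(p_X / p_{\widetilde X})\, d\bm x$ and bound it on both sides. The lower bound in Eq.~\eqref{thm2_bound} is essentially immediate from Gibbs' inequality: $\mathrm{KL}(p_X \,\|\, p_{\widetilde X}) \ge 0$, which rearranges to $\mathcal{L}_1 \ge -\int_{\mathbb{R}^d} p_X(\bm x) \log p_X(\bm x)\, d\bm x$. So the substance of the theorem is the upper bound, which must extract the factor $\varepsilon$ coming through Lemma \ref{UAT}, Lemma \ref{Thm1} and Assumption \ref{ass2}.

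For the upper bound, the idea is to split the domain into a compact piece $\Omega$ and its tail $\mathbb{R}^d\setminus\Omega$, and choose $\Omega$ large enough to absorb the tail contribution into $\varepsilon$. On the tail, I would use Assumption \ref{ass2} to write $\log(p_X(\bm x)/p_{\widetilde X}(\bm x)) \le \log A_4 + \alpha\|\bm x\|^2$, and combine this with the Gaussian-type tail decay of $p_X$ from Eq.~\eqref{assum_3} to bound
\[
\int_{\mathbb{R}^d\setminus\Omega} p_X(\bm x)\log\frac{p_X(\bm x)}{p_{\widetilde X}(\bm x)}\, d\bm x \le A_3\int_{\|\bm x\|>R} e^{-\alpha\|\bm x\|^2}(\log A_4 + \alpha\|\bm x\|^2)\, d\bm x,
\]
which can be made arbitrarily small (say less than $\varepsilon/2$) by choosing $\Omega \supset B(0,R)$ with $R$ large; the lower bound in Assumption \ref{ass2} symmetrically handles the case where $p_X/p_{\widetilde X}$ is small, so $|p_X\log(p_X/p_{\widetilde X})|$ is integrable on the tail.

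On the compact set $\Omega$, I would use Lemma \ref{Thm1} together with the fact that $p_X$ is bounded below on $\Omega$ by some $\delta>0$ (since $p_X>0$ is continuous on the compact set) to ensure $p_{\widetilde X}(\bm x) > \delta/2$ for $\varepsilon$ small enough. Then writing
\[
\log\frac{p_X(\bm x)}{p_{\widetilde X}(\bm x)} = \log\!\left(1 + \frac{p_X(\bm x) - p_{\widetilde X}(\bm x)}{p_{\widetilde X}(\bm x)}\right),
\]
and using $|\log(1+u)| \le 2|u|$ for $|u|\le 1/2$ together with $|p_X - p_{\widetilde X}| < C\varepsilon'$ from Lemma \ref{Thm1}, I obtain
\[
\int_\Omega p_X(\bm x)\left|\log\frac{p_X(\bm x)}{p_{\widetilde X}(\bm x)}\right|\, d\bm x \le \frac{4C\varepsilon'}{\delta}\int_\Omega p_X(\bm x)\, d\bm x \le \frac{4C\varepsilon'}{\delta}.
\]
After fixing $R$ (hence $\Omega$) for the tail, I shrink $\varepsilon'$ from Lemma \ref{UAT} to make this compact part below $\varepsilon/2$, giving the full bound $\mathcal{L}_1 + \int p_X \log p_X\, d\bm x < \varepsilon$.

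The main obstacle I anticipate is the tail estimate: the bound in Assumption \ref{ass2} only allows $p_X/p_{\widetilde X}$ to grow like $e^{\alpha\|\bm x\|^2}$, and for the integral against $p_X$ to be finite we implicitly need the decay rate $\alpha$ in Eq.~\eqref{assum_3} to dominate the same $\alpha$ appearing in Eq.~\eqref{assum_4}; the notation is ambiguous on this point, so in writing the proof I would either note that the $\alpha$ in the two assumptions is meant to be compatible (the tail decay constant exceeds the ratio-growth constant), or replace $\alpha$ by distinct constants and require $\alpha_{\text{decay}} > \alpha_{\text{ratio}}$. All remaining work is routine bookkeeping of constants.
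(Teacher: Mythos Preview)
Your proposal is correct and follows essentially the same strategy as the paper: rewrite $\mathcal{L}_1 + \int p_X\log p_X$ as $D_{\rm KL}(p_X\|p_{\widetilde X})$, obtain the lower bound from nonnegativity of KL, and obtain the upper bound by splitting the domain into a compact core (handled via Lemma~\ref{Thm1}) and a tail (handled via Assumptions~\ref{ass1}--\ref{ass2}).

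The one genuine difference is the decomposition. You split into two pieces, the ball $B(0,R)$ and its complement, and on the ball you invoke continuity and strict positivity of $p_X$ to get a uniform lower bound $\delta>0$. The paper instead splits into three pieces: the superlevel set $\{p_X\ge\varepsilon_1\}$, the annular region $\{p_X<\varepsilon_1\}\cap B_d(K_1)$, and the exterior $B_d(K_1)^c$. Their first region has the lower bound built in by construction, at the price of an extra middle region where $p_X$ is small but $\bm x$ is not yet large; this is where they use Assumption~\ref{ass2} together with the smallness of $p_X$. Your two-piece version is a bit tidier but leans on the implicit hypothesis that $p_X$ is continuous and strictly positive everywhere (which the paper does assume via its ``well-behaved'' convention), whereas the paper's three-piece split avoids needing a lower bound on $p_X$ over the whole ball.

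One remark: your worry about the two $\alpha$'s is unnecessary. Since you are bounding $\log(p_X/p_{\widetilde X})$, Assumption~\ref{ass2} gives only polynomial growth $\log A_4+\alpha\|\bm x\|^2$, not exponential growth; any Gaussian-type decay in $p_X$ beats this, so no ordering between the constants is needed. The paper treats this the same way in its $I_3$ estimate.
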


\begin{proof}
Using the inequality $\log t \le t-1$ for all $t>0$, we have
\begin{equation}\label{thm2_lb}
    \begin{aligned}
        -\int_{\mathbb{R}^d} p_X(\bm x) \log p_X(\bm x) d\bm x - \mathcal{L}_1
        =&\int_{\mathbb{R}^d} p_X(\bm x) \log \frac{p_{\widetilde X}(\bm x)}{P_X(\bm x)} d\bm x\\
        \le & \int_{\mathbb{R}^d} p_X(\bm x) \left(\frac{p_{\widetilde X}(\bm x)}{P_X(\bm x)} - 1\right) d\bm x\\
        =&\int_{\mathbb{R}^d} p_{\widetilde X}(\bm x) - p_X(\bm x) d\bm x = 0.        
    \end{aligned}
\end{equation}
To prove the right side of Eq.~\eqref{thm2_bound}, we need to show that
\begin{equation}
    \int_{\mathbb{R}^d} p_X(\bm x) \log \frac{p_X(\bm x)}{p_{\widetilde{X}}(\bm x)} d\bm x < \varepsilon.
\end{equation}
We define $B_d(r) := \{\bm x\in\mathbb{R}^d: \|\bm x\|\le r\}$ and $V_{d-1}(r)$ be the volume of $\{\bm x\in\mathbb{R}^d: \|\bm x\| = r\}$. For every $\varepsilon_1 < \min\{A_3\exp(-\alpha),  A_3\exp(- \alpha K^2)\}$,  $K$ is the constant in Assumption \ref{ass1}, we let $K_1 = \sqrt{-\frac{1}{\alpha} \log\frac{\varepsilon_1}{A_3}}$,
then $K_1 > \max\{1, K\}$. According to Eq.~\eqref{assum_3} in Assumption \ref{ass1}, $p_X(\bm x) < A_3\exp(- \alpha\|\bm x\|^2)\le\varepsilon_1$ over the domain $B_d(K_1)^c$.
Define $\Omega_{\varepsilon_1} := \{\bm x: p_X(\bm x)\ge\varepsilon_1\}$, then $\Omega_{\varepsilon_1}$ is contained in $B_d(K_1)$. Since $p_X(\bm x)$ is continuous, $\Omega_{\varepsilon_1}$ is closed. Moreover, as a bounded closed set in $\mathbb{R}^d$, $\Omega_{\varepsilon_1}$ is compact.
The KL divergence between distributions  $p_X$ and $p_{\widetilde{X}}$ has the following bound
\begin{equation}\label{thm2_kl}
\begin{aligned}
D_{\rm KL}(p_X \, \| \, p_{\widetilde{X}})
=& \int_{\mathbb{R}^d}p_X(\bm x)\log \frac{p_X(\bm x)}{p_{\widetilde{X}}(\bm x)} d\bm x \\
\le& \int_{p_X(\bm x) \ge \varepsilon_1} p_X(\bm x)\log \frac{p_X(\bm x)}{p_{\widetilde{X}}(\bm x)} d\bm x \\
& + \int_{\{p_X(\bm x) < \varepsilon_1\} \cap B_d(K_1)}p_X(\bm x) \log \frac{p_X(\bm x)}{p_{\widetilde{X}}(\bm x)} d\bm x \\
& + \int_{\{p_X(\bm x) < \varepsilon_1\}\cap B_d(K_1)^c} p_X(\bm x) \log \frac{p_X(\bm x)}{p_{\widetilde{X}}(\bm x)} d\bm x \\
:=& I_1 + I_2 + I_3.
\end{aligned}  
\end{equation}

Next we investigate all terms on the right side of Eq.~\eqref{thm2_kl}.
By Lemma \ref{Thm1}, for every $\varepsilon_2 > 0$, there exists variable $\widetilde{X}$ such that
\begin{equation}
    |p_{X}(\bm x)- p_{\widetilde X}(\bm x)| < C \varepsilon_2,
\end{equation}
for any $\bm x \in \Omega_{\varepsilon_1}$.
Choosing $\varepsilon_2 < \frac{\varepsilon_1}{2C}$, we have $p_{\widetilde{X}}(\bm x) > \varepsilon_1/2$ for the case $p_X(\bm x) \ge \varepsilon_1$. Applying the mean value theorem, $I_1$ has the bound 
\begin{equation}\label{thm3_2}
\begin{aligned}
I_1  
= \, &\int_{p_X(\bm x) \ge \varepsilon_1} p_X(\bm x)\left( \log (p_X(\bm x))-\log(p_{\widetilde{X}}(\bm x)) \right)d\bm x\\
\le \, &\int_{p_X(\bm x) \ge \varepsilon_1} p_X(\bm x) \left|\frac{1}{{\xi}}\right| \left|p_X(\bm x) - p_{\widetilde{X}}(\bm x)\right|d\bm x\\
\le \, &\frac{2C\varepsilon_2}{\varepsilon_1} \int_{p_X(\bm x) \ge \varepsilon_1} p_X(\bm x)d\bm x
\le \, \frac{2C\varepsilon_2}{\varepsilon_1},
\end{aligned}  
\end{equation}
where $\left|\frac{1}{\xi}\right| \le \frac{2}{\varepsilon_1}$ because  $\xi$ is between $p_X(\bm x)$ and $p_{\widetilde{X}}(\bm x)$. Since we define  $\varepsilon_2 < \frac{\varepsilon_1}{2C_1}$, we are able to let  $\varepsilon_2/\varepsilon_1 \rightarrow 0$ to guarantee $I_1 \rightarrow 0$.

Positive density functions $p_X(\bm x)$, $p_{\widetilde{X}}(\bm x)$ are continuous, thus $\log \frac{p_X(\bm x)}{p_{\widetilde{X}}(\bm x)}$ is continuous and bounded over $B_d(K)$, i.e., $\log \frac{p_X(\bm x)}{p_{\widetilde{X}}(\bm x)} < C_1$ for ${\bm x} \in B_d(K)$. By Eq.~\eqref{assum_4} in Assumption \ref{ass1}, it follows that $\log \frac{p_X(\bm x)}{p_{\widetilde{X}}(\bm x)} < C_2K_1^2 $ for $\bm x\in B_d(K_1)\setminus B_d(K)$. Hence, $\log \frac{p_X(\bm x)}{p_{\widetilde{X}}(\bm x)} < \max \{C_1,\ C_2K_1^2\}$ for $\bm x \in B_d(K_1)$.
The term $I_2$ has the bound
\begin{equation}\label{thm3_3}
\begin{aligned}
I_2 & = \int_{\{p_X(\bm x)<\varepsilon_1\}\cap B_d(K_1)} p_X(\bm x)\log \frac{p_X(\bm x)}{p_{\widetilde{X}}(\bm x)}d \bm x\\
& \le \int_{\{p_X(\bm x)<\varepsilon_1\}\cap B_d(K_1)}\left|p_X(\bm x)\log \frac{p_X(\bm x)}{p_{\widetilde{X}}(\bm x)}\right| d\bm x\\
& \le C\varepsilon_1\int_{B_d(K_1)} \max\{1,\ K_1^2\}d \bm x \\
&= C\varepsilon_1\frac{\pi^{d/2}}{\Gamma(d/2+1)}\max \{K_1^d,\ K_1^{d+2}\}\\
&\le C \max\{\varepsilon_1 (\log\frac{\varepsilon_1}{A_1})^{d/2},\ \varepsilon_1 (\log\frac{\varepsilon_1}{A_1})^{d/2+1}\},
\end{aligned}  
\end{equation}
where $C$ is a constant and $I_2 \rightarrow 0$ as $\varepsilon_1 \rightarrow 0$. The last inequality holds due to  $K_1 = \sqrt{-\frac{1}{\alpha} \log\frac{\varepsilon_1}{A_1}}$ . Lastly we have the bound for $I_3$

\begin{equation}\label{thm3_4}
\begin{aligned}
I_3 & = \int_{\{p_X(\bm x)<\varepsilon_1\}\cap B_d(K_1)^c} p_X(\bm x)\log \frac{p_X(\bm x)}{p_{\widetilde{X}}(\bm x)} d\bm x\\
& = \int_{B_d(K_1)^c}p_X(\bm x)\log \frac{p_X(\bm x)}{p_{\widetilde{X}}(\bm x)} d\bm x\\
& \le C\int_{B_d(K_1)^c} \exp(-\alpha\|\bm x\|^2) \|\bm x\|^2 d\bm x.
\end{aligned}
\end{equation}
According to the identity $\int_{B_d(R_2)\setminus B_d(R_1)}y(\|\bm x\|)d\bm x = V_{d-1}(1)\int_{R_1}^{R_2}y(r)r^{d-1}dr$, we have
\begin{equation}\label{thm3_5}
\begin{aligned}
I_3& \le C V_{d-1}(1)\int_{K_1}^{+\infty} \exp(-\alpha r^2)r^2r^{d-1}dr\\
& = C\int_{K_1}^{+\infty} \exp(-\alpha r^2)r^{d+1}dr\\
& = C\left(\left[\frac{\exp(-\alpha r^2)r^d}{-2\alpha}\right]_{K_1}^{+\infty}+\frac{d+1}{2\alpha}\int_{K_1}^{+\infty} \exp(-\alpha r^2)r^d dr\right)\\
& = C\left(\left[\frac{\exp(-\alpha r^2)r^d}{-2\alpha}\right]_{K_1}^{+\infty}+\ldots+\frac{(d+1)!}{(2\alpha)^{d+1}}\int_{K_1}^{+\infty} \exp(-\alpha r^2) dr\right),
\end{aligned}  
\end{equation}
where $I_3 \rightarrow 0$ as $K_1 \rightarrow +\infty$. Since $K_1 = \sqrt{-\frac{1}{\alpha} \log\frac{\varepsilon_1}{A_1}}$, we have $I_3 \rightarrow 0$ as $\varepsilon_1 \rightarrow 0$.
\end{proof}

In summary, Theorem \ref{Thm2} shows that the loss function $\mathcal{L}_1$ converges to the entropy of $p_X(\bm x)$ as $\bm h$ converges to $\bm f$.
The following Theorem \ref{Thm3} proves the convergence of the continuous loss function $\mathcal{L}_2$ as defined in Eq.~\eqref{con_l2}.

\begin{theorem}\label{Thm3}
For arbitrarily small $\varepsilon > 0$, there exists a single-hidden-layer neural network $\bm g$ such that 
\begin{equation}\label{thm3_bound}
    \mathcal{L}_2 < \varepsilon,
\end{equation}
in any compact set $\Omega\subseteq\mathbb{R}^d$.
%
\end{theorem}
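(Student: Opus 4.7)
The plan is to invoke the universal approximation theorem (Lemma \ref{UAT}) a second time, now applied to $\bm f^{-1}$, in order to produce a single-hidden-layer network $\bm g$ that jointly controls the two summands in the integrand of $\mathcal{L}_2$. With $\bm h$ already chosen as in Section \ref{sec:exist} so that $\bm h$ is invertible and both $\|\bm f(\bm x)-\bm h(\bm x)\|$ and the first-order partials of $\bm f-\bm h$ are bounded by some $\varepsilon_1>0$ on $\Omega$, the idea is to pick $\bm g$ so that $\bm g\approx \bm f^{-1}$ on a slightly enlarged compact neighborhood of $\bm f(\Omega)$ which contains $\bm h(\Omega)$.

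First I would set up the compact domain for $\bm g$: since $\bm f\in C^1$ and $\Omega$ is compact, $\bm f(\Omega)$ is compact, and the uniform bound $\|\bm h(\bm x)-\bm f(\bm x)\|<\varepsilon_1$ implies $\bm h(\Omega)\subseteq \widetilde{\Omega}$ for some fixed compact $\widetilde{\Omega}$ on which $\bm f^{-1}$ is $C^1$ (by the inverse function theorem, using Assumption \ref{ass1}). Applying Lemma \ref{UAT} to $\bm f^{-1}$ on $\widetilde{\Omega}$ yields a single-hidden-layer network $\bm g$ satisfying $\|\bm g(\bm z)-\bm f^{-1}(\bm z)\|<\varepsilon_2$ and $|\partial g_i/\partial z_j-\partial f^{-1}_i/\partial z_j|<\varepsilon_2$ uniformly on $\widetilde{\Omega}$.

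Next, for the reconstruction term I would use the triangle inequality
\begin{equation*}
\|\bm g(\bm h(\bm x))-\bm x\|\le \|\bm g(\bm h(\bm x))-\bm f^{-1}(\bm h(\bm x))\|+\|\bm f^{-1}(\bm h(\bm x))-\bm f^{-1}(\bm f(\bm x))\|,
\end{equation*}
where the first term is bounded by $\varepsilon_2$ from the UAT construction and the second by $L\varepsilon_1$, with $L$ a Lipschitz constant for $\bm f^{-1}$ on $\widetilde{\Omega}$ (finite by $C^1$ regularity on a compact set). For the Jacobian term I would exploit the identity $\det \mathbf{J}_{\bm f^{-1}}(\bm f(\bm x))\det \mathbf{J}_{\bm f}(\bm x)=1$ to rewrite
\begin{equation*}
|\det \mathbf{J}_{\bm g}(\bm h(\bm x))\det \mathbf{J}_{\bm h}(\bm x)-1|
=|\det \mathbf{J}_{\bm g}(\bm h(\bm x))\det \mathbf{J}_{\bm h}(\bm x)-\det \mathbf{J}_{\bm f^{-1}}(\bm f(\bm x))\det \mathbf{J}_{\bm f}(\bm x)|.
\end{equation*}
Using exactly the Leibniz-expansion argument of Eq.~\eqref{thm1_jf-jh}, each factor in the product is within $O(\varepsilon_1+\varepsilon_2)$ of its target, and the discrepancy $|\det \mathbf{J}_{\bm f^{-1}}(\bm h(\bm x))-\det \mathbf{J}_{\bm f^{-1}}(\bm f(\bm x))|$ is $O(\varepsilon_1)$ by continuity of $\det \mathbf{J}_{\bm f^{-1}}$ on $\widetilde{\Omega}$. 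Because each factor is uniformly bounded on the relevant compact set (cf.\ Eqs.~\eqref{thm1_jhup}--\eqref{thm1_jhlow}), a standard ``add and subtract'' decomposition of the product gives a uniform $O(\varepsilon_1+\varepsilon_2)$ bound on the Jacobian summand.

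Combining the two pointwise bounds and using boundedness of $p_X$ and finiteness of $|\Omega|$, one concludes $\mathcal{L}_2\le C(\varepsilon_1+\varepsilon_2+\varepsilon_2^2)$ for a constant $C$ depending only on $\Omega$, $\bm f$, and $p_X$; taking $\varepsilon_1,\varepsilon_2$ small enough yields $\mathcal{L}_2<\varepsilon$. The main obstacle I anticipate is the joint management of $\bm h$ and $\bm g$: since $\bm g$ is approximated on $\widetilde{\Omega}\supseteq \bm h(\Omega)$ rather than on $\bm f(\Omega)$, one must fix $\bm h$ first, then choose $\widetilde{\Omega}$ large enough to absorb any $\bm h(\Omega)$ arising from the $\varepsilon_1$-ball of approximations; verifying that the UAT constants and Lipschitz constant $L$ remain controlled on this $\widetilde{\Omega}$ is the place where care is needed, but compactness and the $C^1$ regularity of $\bm f^{-1}$ make this work.
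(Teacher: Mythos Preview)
Your proof is correct, but it takes a genuinely different route from the paper's. The paper does \emph{not} approximate $\bm g$ to $\bm f^{-1}$; instead it approximates $\bm g$ to $\bm h^{-1}$, the inverse of the already-constructed forward network (invertible on $\Omega$ by Lemma \ref{lem_inv}). After bounding $p_X$ above, the paper performs the change of variables $\bm y=\bm h(\bm x)$ and rewrites the two integrands as $\|\bm g(\bm y)-\bm h^{-1}(\bm y)\|^2\,|\det\mathbf{J}_{\bm h^{-1}}(\bm y)|$ and $|\det\mathbf{J}_{\bm g}(\bm y)-\det\mathbf{J}_{\bm h^{-1}}(\bm y)|$ over $\bm h(\Omega)\subseteq B_d(M)$; a single application of Lemma \ref{UAT} (to $\bm h^{-1}$) and the Leibniz expansion then give both terms of order $\varepsilon_1$ immediately.

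The trade-off is this: the paper's choice $\bm g\approx\bm h^{-1}$ makes the algebra collapse (no triangle inequality through $\bm f^{-1}$, no separate Lipschitz or continuity argument for $\det\mathbf{J}_{\bm f^{-1}}$), but it relies on $\bm h^{-1}$ being $C^1$ on a compact set, which one has to justify via the inverse function theorem and a suitable extension. Your approach, approximating $\bm g\approx\bm f^{-1}$ on an enlarged $\widetilde{\Omega}\supseteq\bm h(\Omega)$, is conceptually cleaner in that $\bm f^{-1}$ is the globally defined $C^1$ diffeomorphism given by the problem, but it costs you the two extra estimates (Lipschitz control on $\bm f^{-1}$ and uniform continuity of $\det\mathbf{J}_{\bm f^{-1}}$) and the bookkeeping with $\widetilde{\Omega}$ that you correctly flagged as the delicate point. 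Both arguments are valid; the paper's is shorter, yours is a bit more robust with respect to the regularity required of $\bm h$.
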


\begin{proof}
By Assumption \ref{ass1}, $p_X$ is bounded. $\mathcal{L}_2$ has the following bound:
\begin{equation}\label{thm2_l2}
    \begin{aligned}
        \mathcal{L}_2 =\, & \int_{\Omega} p_X(\bm x)\left(\|\bm g(\bm h(\bm x))-\bm x\|^2 d\bm x + \int_{\Omega}|\det {\bf J}_{\bm g}(\bm h(\bm x)) \det {\bf J}_{\bm h}(\bm x)-1|\right)d\bm x\\
        \le \,  &  C\int_{\Omega} \|\bm g(\bm h(\bm x))-\bm x\|^2 d\bm x + \int_{\Omega}|\det {\bf J}_{\bm g}(\bm h(\bm x)) \det {\bf J}_{\bm h}(\bm x)-1|d\bm x\\
        = \, & C\Big(\int_{\Omega} \|\bm g(\bm h(\bm x))-\bm h^{-1} (\bm h (\bm x))\|^2 d\bm x \\
        \, & + \int_{\Omega}|\det {\bf J}_{\bm g}(\bm h(\bm x)) (\det {\bf J}_{\bm h^{-1}}(\bm h(\bm x)))^{-1} - 1|d\bm x\Big)\\
        = \, & C\Big(\int_{\bm h(\Omega)} \|\bm g(\bm y)-\bm h^{-1} (\bm y)\|^2|\det {\bf J}_{\bm h^{-1}}(\bm y)| d\bm y \\
        \, & + \int_{{\bm h}(\Omega)}|\det {\bf J}_{\bm g}(\bm y) - \det {\bf J}_{\bm h^{-1}}(\bm y)|d\bm y\Big):=C(I_1+I_2).
    \end{aligned}
\end{equation}

Since $\bm h$ is continuous in a compact set $\Omega$, by the extreme value theorem, $\bm h$ is bounded in $\Omega$, i.e., there exists a constant $M$ such that $|\bm h(\bm x)| \leq M$ for all $\bm x \in \Omega$. Therefore, $\bm h(\Omega)\subseteq B_d(M)$. Let $\bm g$ be the approximation of $\bm h^{-1}$, and the term $I_1$ has the bound
\begin{equation}\label{thm2_i1}
    \begin{aligned}
        I_1=&\int_{\bm h(\Omega)} \|\bm g(\bm y)-\bm h^{-1} (\bm y)\|^2|\det {\bf J}_{\bm h^{-1}}(\bm y)| d\bm y\\
        \le&\int_{B_d(M)} \|\bm g(\bm y)-\bm h^{-1} (\bm y)\|^2|\det {\bf J}_{\bm h^{-1}}(\bm y)| d\bm y\\
        =&\int_{B_d(M)} \left(\sum_{i=1}^d | g_i(\bm y)- h^{-1}_i (\bm y)|^2 \right)|\det {\bf J}_{\bm h^{-1}}(\bm y)| d\bm y\le\int_{B_d(M)} d\varepsilon_1^2 C d\bm y \le C\varepsilon_1^2,
    \end{aligned}
\end{equation}
where the last inequality holds by Lemma \ref{UAT} with $k = 0$ and the determinate of Jacobian of ${\bm h}^{-1}$ has the upper bound in Eq.~\eqref{thm1_jhlow}.
Let $\mathcal{S}_d$ be the set of all permutation of $\{1,\ldots,d\}$, we have
\begin{equation}\label{thm2_jg-jh-1}
|\det {\bf J}_{\bm g}(\bm y) - \det {\bf J}_{\bm h^{-1}}(\bm y)|
=\left| \sum_{\sigma\in \mathcal{S}_d}\left( {\rm sgn}(\sigma) \left(\prod_{i = 1}^d \frac{\partial g_i(\bm y)}{\partial x_{\sigma_i}} -\prod_{i = 1}^d \frac{\partial h^{-1}_i(\bm y)}{\partial x_{\sigma_i}} \right)\right) \right|< C \varepsilon_1
\end{equation}
for any $\bm y \in B_d(M)$, where the last inequality holds by Lemma \ref{UAT} with $k = 1$. We have the bound for $I_2$
\begin{equation}\label{thm2_i2}
        I_2=\int_{\bm h(\Omega)}|\det {\bf J}_{\bm g}(\bm y) - \det {\bf J}_{\bm h^{-1}}(\bm y)|d\bm y\le\int_{ B_d(M)}|\det {\bf J}_{\bm g}(\bm y) - \det {\bf J}_{\bm h^{-1}}(\bm y)|d\bm y \le C\varepsilon_1.
\end{equation}

The convergence of $\mathcal{L}_2$ to 0 as $\varepsilon_1$ approaches 0 is established by Eqs.~\eqref{thm2_l2}, \eqref{thm2_i1} and \eqref{thm2_i2}. By choosing $\varepsilon_1$ sufficiently small, we complete the proof.
\end{proof}

\subsection{Convergence of the KL divergence}\label{sec:kl}
The analysis in Section \ref{sec:l1l2} shows that the convergence of $\mathcal{L}_1$ and $\mathcal{L}_2$ is a necessary condition 
necessary condition to obtain the convergent approximation discussed in Section \ref{sec:exist}, which verifies that appropriateness of the  loss function used to train the PR-NF model. Here we show that the convergence of $\mathcal{L}_1$ and $\mathcal{L}_2$ plus some mild assumptions on the $\bm h$ and $\bm g$ is sufficient to ensure the convergence of the KL divergence between $p_X$ and $p_{\widehat{X}}$. 


\begin{assum}\label{ass3}
We assume that the target random variable $X$ has finite second-order moment, and there exists positive constant $A_5$ such that
\begin{equation}\label{assum_5}
    \nabla p_{\widehat X}(\bm x)\le A_5(\|\bm x\|+1)p_{\widehat X}(\bm x),
\end{equation}
where $p_{\widehat X}(\bm x)$ is defined in Eq.~\eqref{p_xtilde}.
\end{assum}

\begin{theorem}\label{Thm4}
For any given $\varepsilon>0$, under the assumptions of Theorems \ref{Thm2}, \ref{Thm3} and Assumption \ref{ass3}, the KL divergence between target variable $X$ and the approximation $\widehat{X}$ defined in Eq.~\eqref{eq:xxx} satisfies $D_{\rm KL}(p_X\|p_{\widehat X})<\varepsilon$.
\end{theorem}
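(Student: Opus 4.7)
The strategy is to decompose
\begin{equation*}
D_{\rm KL}(p_X \,\|\, p_{\widehat X}) = D_{\rm KL}(p_X \,\|\, p_{\widetilde X}) + \int_{\mathbb{R}^d} p_X(\bm x)\,\log\frac{p_{\widetilde X}(\bm x)}{p_{\widehat X}(\bm x)}\,d\bm x,
\end{equation*}
bound the first summand by Theorem~\ref{Thm2}, and control the residual term involving $p_{\widetilde X}/p_{\widehat X}$ by translating the pseudo-reversibility bound of Theorem~\ref{Thm3} into pointwise density closeness between $p_{\widetilde X}$ and $p_{\widehat X}$.

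For the residual term, the pointwise estimates $\|\bm g(\bm y)-\bm h^{-1}(\bm y)\|<C\varepsilon_1$ and $|\det {\bf J}_{\bm g}(\bm y)-\det {\bf J}_{\bm h^{-1}}(\bm y)|<C\varepsilon_1$ established inside the proof of Theorem~\ref{Thm3} on $\bm h(\Omega)\subseteq B_d(M)$ transfer, via the inverse-function-theorem estimates of Lemma~\ref{lem_inv} and a change of variables, into $\|\bm g^{-1}(\bm x)-\bm h(\bm x)\|<C\varepsilon_1$ and $|\det {\bf J}_{\bm g^{-1}}(\bm x)-\det {\bf J}_{\bm h}(\bm x)|<C\varepsilon_1$ on $\Omega$. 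Substituting these into $p_{\widetilde X}(\bm x)=p_Z(\bm h(\bm x))|\det {\bf J}_{\bm h}(\bm x)|$ and $p_{\widehat X}(\bm x)=p_Z(\bm g^{-1}(\bm x))|\det {\bf J}_{\bm g^{-1}}(\bm x)|$, and repeating the mean-value-theorem argument from Lemma~\ref{Thm1} (using that $p_Z$ has bounded gradient), yields $|p_{\widetilde X}(\bm x)-p_{\widehat X}(\bm x)|<C\varepsilon_1$ for all $\bm x\in\Omega$. To convert this pointwise closeness into a bound on the log-ratio integral, I would mirror the three-region decomposition in Theorem~\ref{Thm2}: fix $\varepsilon_2>0$, set $K_1=\sqrt{-\alpha^{-1}\log(\varepsilon_2/A_3)}$, and split the integration domain into $\{p_X\ge\varepsilon_2\}\subset B_d(K_1)$, $\{p_X<\varepsilon_2\}\cap B_d(K_1)$, and $B_d(K_1)^c$. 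On the bulk set, Lemma~\ref{Thm1} gives $p_{\widetilde X}\ge\varepsilon_2/2$, so pointwise closeness forces $p_{\widehat X}\ge\varepsilon_2/4$, and the mean value theorem applied to $\log$ bounds the contribution by $O(\varepsilon_1/\varepsilon_2)$, made small by choosing $\varepsilon_1\ll\varepsilon_2^2$. The intermediate annulus contributes an $O(\varepsilon_2\log^{d/2+1}(1/\varepsilon_2))$ term exactly as in~\eqref{thm3_3}.

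The main obstacle is the tail $B_d(K_1)^c$, where $\mathcal{L}_2$ gives no direct control because Theorem~\ref{Thm3}'s pointwise estimates are confined to the compact set $\Omega$. This is precisely where Assumption~\ref{ass3} enters: the growth bound in~\eqref{assum_5} implies $\|\nabla\log p_{\widehat X}(\bm x)\|\le A_5(\|\bm x\|+1)$, and integrating along rays from a point where $\log p_{\widehat X}$ is bounded yields $|\log p_{\widehat X}(\bm x)|=O(\|\bm x\|^2)$. Combined with the Gaussian decay $p_X(\bm x)\le A_3\exp(-\alpha\|\bm x\|^2)$ from~\eqref{assum_3} and the finite second-moment hypothesis in Assumption~\ref{ass3}, the tail integral $\int_{B_d(K_1)^c} p_X(\bm x)\,|\log(p_{\widetilde X}/p_{\widehat X})|\,d\bm x$ vanishes as $K_1\to\infty$, i.e.\ as $\varepsilon_2\to 0$. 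Taking $\varepsilon_2$ small enough to handle the tail and the annulus, and then $\varepsilon_1$ small enough relative to $\varepsilon_2$ to handle the bulk, gives $D_{\rm KL}(p_X\,\|\,p_{\widehat X})<\varepsilon$ as claimed.
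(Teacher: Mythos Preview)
Your decomposition $D_{\rm KL}(p_X\|p_{\widehat X})=D_{\rm KL}(p_X\|p_{\widetilde X})+\int p_X\log(p_{\widetilde X}/p_{\widehat X})$ matches the paper's starting point, but the treatment of the residual term diverges. The paper does not attempt pointwise density closeness $|p_{\widetilde X}-p_{\widehat X}|<C\varepsilon_1$ nor a three-region split. Instead, it introduces the composite map $\bm M:=\bm g\circ\bm h$ and uses the change-of-variables identity $p_{\widetilde X}(\bm x)=p_{\widehat X}(\bm M(\bm x))\,|\det{\bf J}_{\bm M}(\bm x)|$ to rewrite the residual as
\[
\int_{\mathbb{R}^d} p_X(\bm x)\log\frac{p_{\widehat X}(\bm M(\bm x))}{p_{\widehat X}(\bm x)}\,d\bm x
+\int_{\mathbb{R}^d} p_X(\bm x)\log|\det{\bf J}_{\bm M}(\bm x)|\,d\bm x.
\]
The second integral is handled globally by $\log|t|\le|t-1|$ and the $|\det{\bf J}_{\bm M}-1|$ part of $\mathcal{L}_2$. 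For the first, Assumption~\ref{ass3} is invoked not just on the tail but everywhere: the mean value theorem gives $|\log p_{\widehat X}(\bm M(\bm x))-\log p_{\widehat X}(\bm x)|\le A_5(\|\bm\xi\|+1)\|\bm M(\bm x)-\bm x\|$, and then Cauchy--Schwarz together with the finite second moment of $X$ bounds the integral by $C(\varepsilon_2+\varepsilon_2^{1/2})$, where $\varepsilon_2$ controls $\int p_X\|\bm M(\bm x)-\bm x\|^2$. This route ties the residual directly to the two integrands of $\mathcal{L}_2$ without inverting $\bm g$, without pointwise Jacobian comparisons on $\Omega$, and without a separate tail analysis. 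Your approach is workable in principle, but the tail step is under-specified: you control $|\log p_{\widehat X}|$ via Assumption~\ref{ass3} yet still need a matching $O(\|\bm x\|^2)$ bound on $|\log p_{\widetilde X}|$ (or on $p_X|\log p_X|$) outside the compact set, which requires an additional argument. The paper's $\bm M$-identity sidesteps that issue entirely and makes the role of the finite-second-moment hypothesis transparent.
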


\begin{proof}
For arbitrarily small $\varepsilon_1 >0$, under Theorems \ref{Thm2}, we have
\begin{equation}\label{thm4_l1}
\begin{aligned}
    \mathcal{L}_1 \le -\int_{\mathbb{R}^d} p_X(\bm x) \log p_X(\bm x) d\bm x + \varepsilon_1,
\end{aligned}
\end{equation}
and for arbitrarily small $\varepsilon_2 >0$, under Theorems \ref{Thm3}, we have
\begin{equation}\label{thm4_l2}
    \mathcal{L}_2(\mathbb{R}^d) = \int_{\mathbb{R}^d} p_X({\bm x})\left(\|{\bm g(\bm h(\bm x))-\bm x}\|^2 + |\det {\bf J}_{\bm g}(\bm h({\bm x})) \det {\bf J}_{\bm h}({\bm x}) -1|\right)d{\bm x} < \varepsilon_{2}.
\end{equation}
To simplify the notation, we denote $\bm M(\bm x):=\bm g(\bm h(\bm x))$, Then Eq.~\eqref{thm4_l2} is rewritten as 

\begin{equation}\label{thm4_l2m}
    {\mathcal{L}_2(\mathbb{R}^d) = \int_{\mathbb{R}^d} p_X(\bm x)(\|\bm M(\bm x)-\bm x\|^2 + |\det {\bf J}_{\bm M}(\bm x) -1|)d\bm x} < \varepsilon_2.
\end{equation}
Since $\widehat X =\bm g(Z) = \bm g(\bm h(\widetilde X)) = \bm M(\widetilde X)$, by the change of variables formula we have
\begin{equation}\label{thm4_cov}
    p_{\widetilde X}(\bm x) = p_{\widehat X}(\bm M (\bm x))|\det {\bf J}_{\bm M}(\bm x)|.
\end{equation}

The KL divergence between distributions $P_X$ and $P_{\widehat X}$ has the following bound
\begin{equation}
    \begin{aligned}
        D_{\rm KL}(p_X\|p_{\widehat X})\le&\int_{\mathbb{R}^d} p_X(\bm x) \log \frac{p_X(\bm x)}{p_{\widehat{X}}(\bm x)} d\bm x \\
        \le&\int_{\mathbb{R}^d} p_X(\bm x) \log \frac{p_X(\bm x)}{p_{\widetilde X}(\bm x)} dx +\int_{\mathbb{R}^d} p_X(x) \log \frac{p_{\widetilde X}(\bm x)}{p_{\widehat X}(\bm x)} d\bm x \\
        =&\int_{\mathbb{R}^d} p_X(\bm x) \log \frac{p_X(\bm x)}{p_{\widetilde X}(\bm x)} dx +\int_{\mathbb{R}^d} p_X(\bm x) \log \frac{p_{\widehat X}(\bm M (\bm x))|\det {\bf J}_{\bm M}(\bm x)|}{p_{\widehat X}(\bm x)} d\bm x \\
        =&\int_{\mathbb{R}^d} p_X(\bm x) \log \frac{p_X(\bm x)}{p_{\widetilde X}(\bm x)} dx +\int_{\mathbb{R}^d} p_X(\bm x) \log \frac{p_{\widehat X}(\bm M (\bm x))}{p_{\widehat X}(\bm x)} d\bm x \\
        &+ \int_{\mathbb{R}^d} p_X(\bm x) \log |\det {\bf J}_{\bm M}(\bm x)| d\bm x :=I_1+I_2+I_3.
    \end{aligned}
\end{equation}
According to Eq.~\eqref{thm4_l1}, we have $I_1<\varepsilon_1$. Apply the mean value theorem, $I_2$ has the bound
\begin{equation}
    \begin{aligned}
        I_2 =&\int_{\mathbb{R}^d} p_X(\bm x) \log \frac{p_{\widehat X}(\bm M (\bm x))}{p_{\widehat X}(\bm x)} d\bm x \\
        =&\int_{\mathbb{R}^d} p_X(\bm x) \Big(\log (p_{\widehat X}(\bm M (\bm x)) )-\log (p_{\widehat X}(\bm x))\Big) d\bm x\\
        =&\int_{\mathbb{R}^d} p_X(\bm x)\left(\frac{\nabla p_{\widehat X}(\bm \xi)}{p_{\widehat X}(\bm \xi)} \left\| \bm M (\bm x)-\bm x\right\| \right)d\bm x,
    \end{aligned}
\end{equation}
where $\bm \xi$ is between $\bm M(\bm x)$ and $\bm x$. By Eq.~\eqref{assum_5} in Assumption \ref{ass1}, we have
\begin{equation}
    \begin{aligned}
        I_2 \le \, &C\int_{\mathbb{R}^d} p_X(\bm x)(\|\bm \xi\|+1)\left\| \bm M (\bm x)-\bm x\right\|d\bm x\\
        \le \, &C\int_{\mathbb{R}^d} p_X(\bm x)\left(\|\bm M(\bm x)-\bm x\| + \|\bm x\| +1\right)\left\| \bm M (\bm x)-\bm x\right\|d\bm x\\
        = \, &C\bigg(\int_{\mathbb{R}^d} p_X(\bm x)\left\| \bm M (\bm x)-\bm x\right\|^2 d\bm x + \int_{\mathbb{R}^d} p_X(\bm x) \|\bm x\|\left\| \bm M (\bm x)-\bm x\right\|d\bm x\\
        \, &+\int_{\mathbb{R}^d} p_X(\bm x) \left\| \bm M (\bm x)-\bm x\right\|d\bm x\bigg)\\
        \le \, &C\bigg[\varepsilon_2 + \left(\int_{\mathbb{R}^d} p_X(\bm x)\|\bm x\|^2 d\bm x\right)^{\frac{1}{2}} \left(\int_{\mathbb{R}^d} p_X(\bm x)\left\| \bm M (\bm x)-\bm x\right\|^2 d\bm x\right)^{\frac{1}{2}}\\
        \, &+\left(\int_{\mathbb{R}^d} p_X(\bm x) d\bm x\right)^{\frac{1}{2}} \left(\int_{\mathbb{R}^d} p_X(\bm x)\left\| \bm M (\bm x)-\bm x\right\|^2 d\bm x\right)^{\frac{1}{2}}\bigg]\le C(\varepsilon_2+\varepsilon_2^\frac{1}{2}).
    \end{aligned}
\end{equation}
Since $\log |t|\le|t-1|$ for every $t\in\mathbb{R}$, $I_3$ has the bound
\begin{equation}
        I_3=\int_{\mathbb{R}^d} p_X(\bm x) \log |\det {\bf J}_{\bm M}(\bm x)| d\bm x
        \le\int_{\mathbb{R}^d} p_X(\bm x) |\det {\bf J}_{\bm M}(\bm x)-1| d\bm x\le\varepsilon_2.
\end{equation}
Therefore,
\begin{equation}
    D_{\rm KL}(p_X\|p_{\widehat X})\le\varepsilon_1+C(\varepsilon_2+\varepsilon_2^\frac{1}{2}).
\end{equation}
Let $\varepsilon_1<\varepsilon/2$ and $\varepsilon_2<\min\{\varepsilon^2/(16C^2), 1\}$, then we have
\begin{equation}
    \begin{split}
        D_{\rm KL}(p_X\|p_{\widehat X})\le\varepsilon_1+C(2\varepsilon_2^\frac{1}{2})
        \le\frac{\varepsilon}{2}+C\frac{\varepsilon}{2C}
        \le\varepsilon.
    \end{split}
\end{equation}
Thus, $D_{\rm KL}(p_X\|p_{\widehat X}) \le \varepsilon$. We complete the proof.   
\end{proof}

\section{Numerical examples and applications}\label{sec:ex}
In this section we present numerical experiments demonstrating the superior performance of the PR-NF model in comparison with the standard MC method. The example in Sec.\ref{sec:ex1} benchmarks the accuracy of our method for a problem with  known analytical ground-truth solution. 
In Sec.~\ref{sec:ex2} we present an application to plasma physics and in Sec.\ref{sec:exfluid} an application to fluid mechanics. 
Since in these cases there are not known analytical solutions, 
the ground-truth corresponds to MC simulations with
sufficiently large samples. 
In all numerical simulations, the PyTorch machine learning framework has been implement with CUDA GPU. The PR-NF model uses the Adam optimizer \cite{kingma2014adam} and the Tanh activation function.

\subsection{Verification of algorithm accuracy}\label{sec:ex1}
In this subsection, we use a one-dimensional nonlinear SDE and a ten-dimensional linear SDE to verify the accuracy of the proposed algorithm. We first consider the following one-dimensional SDE
\begin{equation}\label{eq_ex1}
X_t  =  X_0 + \int_0^t {\bm b}(s, X_s) ds
+  \int_0^t {\bm \sigma}(s,  X_s) d  W_s \;\; \text{ with } X_0 \in [0,L],
\end{equation}
where ${\bm b}(s, X_s) = 2\sqrt{X_s} + 1$, ${\bm \sigma}(s, X_s) = 2\sqrt{X_s}$, $L=5$, $t= 0.1$. The analytical solution of this SDE is
\begin{equation}\label{eq_ex1solu}
    X_t = (\sqrt{X_0} + t + W_t)^2,
\end{equation}
where $X_0$ is the initial condition, and the corresponding conditional distribution is 
\begin{equation}
\label{eq_ex1_pdf}
 p_{X_t|X_0}( x|x_0) = \frac{1}{{2\sqrt{2\pi tx}}}\left[
 \exp{\left(\frac{-(\sqrt{x}-\sqrt{x_0}-t)^2}{2t}\right)}
 + \exp{\left(\frac{-(\sqrt{x}+\sqrt{x_0}+t)^2}{2t}\right)}
 \right].
\end{equation}
The distribution of $X_t$ subject to an initial distribution $p_{X_0}(x_0)$ can be computed by the convolution with $p_{X_t|X_0}( x|x_0)$. Since the analytical solution is known, we use this example to test the accuracy of the propose method.

The training dataset consists of $N_{\rm train} = 20000$ samples with initial positions $\{X_0^{(n)}\}_{n = 1}^{N_{\rm train}}$ sampled from a uniform  distribution over the domain $\mathcal{D} = [0,5]$, and terminal positions $\{X_t^{(n)}\}_{n = 1}^{N_{\rm train}}$ generated by MC simulations of Eq.~\eqref{eq_ex1}.
The neural network has $N_{\rm layer} = 1$ hidden layer with $N_{\rm neuron} = 256$ neurons. 
The hyperparameter $\lambda$ in the loss function is selected according to the method described in  Sec.~\ref{sec:hyper}. The left panel of Fig.~\ref{ex1_crossentr} shows the cross entropy $H(\lambda)$ defined in Eq.~\eqref{eq:cross} with different $\lambda$. As it shown, the PR-NF model attains the minimum of $H(\lambda$) when $\lambda = 50$. The middle panel and right panel of Fig.~\ref{ex1_crossentr} correspond to the accuracy performance of the proposed PR-NF model with two cases $\lambda = 1$ and $\lambda = 50$, respectively. The red curve is the exact density function and the histogram is synthesized by the PR-NF model. A good agreement is shown in the right panel ($\lambda = 50$). In contrast, $\lambda = 1$ is too small to guarantee the reversibility. Without the tuning process of $\lambda$, the PR-NF model may fail to balance the $\mathcal{L}_1$ and $\mathcal{L}_2$, e.g., the middle panel of Fig.~\ref{ex1_crossentr}. The minimization of cross entropy $H(\lambda)$ in Section \ref{sec:hyper} is an effective and appropriate approach to choose $\lambda$. 
An optimal parameter $\lambda$ is necessary to obtain an accurate surrogate model. 
\begin{figure}[h!]
\settoheight{\tempdima}
\centering\begin{tabular}{@{}c@{ }c@{ }c@{}}
\vspace{-0.2cm}
\includegraphics[width=.32\linewidth]{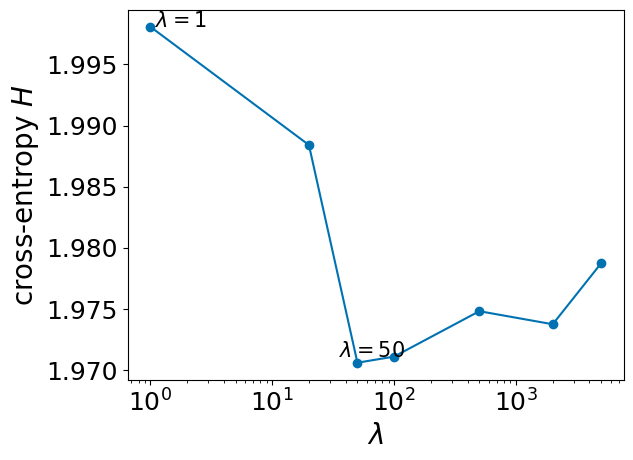}&\hspace{-0.1cm}
\includegraphics[width=.32\linewidth]{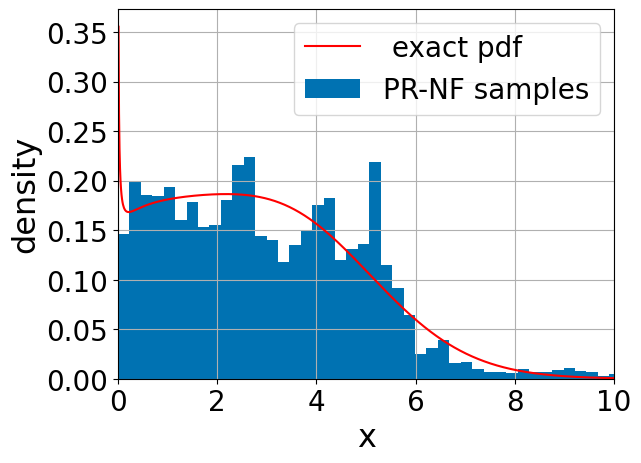}&\hspace{-0.25cm}
\includegraphics[width=.32\linewidth]{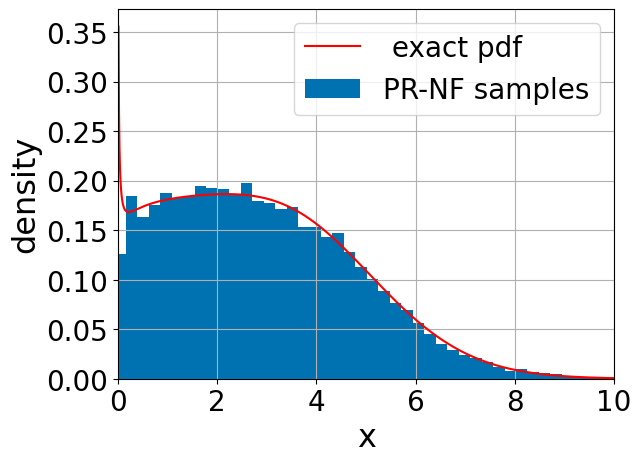}\\[-1.5ex]
\end{tabular}
\caption{The left panel shows the cross entropy $H(\lambda)$, which attains the minimun when $\lambda = 50$. The middle and right panels show the fitting performance for $\lambda = 1$ and $\lambda = 50$, respectively. It is observed that the accuracy of PR-NF model with $\lambda = 50$ is better than the case $\lambda = 1$. Without the tuning process of $\lambda$, the PR-NF model may fail to balance the $\mathcal{L}_1$ and $\mathcal{L}_2$ (e.g., $\lambda = 1$ case). The minimization of the cross entropy $H(\lambda)$ is an effective and correct approach to choose $\lambda$. 
The PR-NF model with an optimal parameter $\lambda$ ($\lambda = 50$ case) achieves a good fitting performance.}%
\label{ex1_crossentr}
\end{figure}

We now use the well-trained PR-NF model ($\lambda = 50$) to sample $X_t$ with variable initial distributions. The top row of Fig.~\ref{fig_ex1} shows four different initial distributions, denoted as ``$\delta$ func", ``bar", ``sin2", and ``ricker". The bottom row of Fig.~\ref{fig_ex1} shows corresponding histogram of $X_t$ generated by the PR-NF model, where a good agreement with the exact density is observed.  The results demonstrate that the well-trained PR-NF model can handle different initial conditions without additional training process.  Figure~\ref{Ex1_1DKL} shows the decay of the loss function of training dataset (left panel) and the decay of the Kullback–Leibler (KL) divergence of the four initial distributions (right panel). The KL divergence formula is defined as the relative entropy from the approximate density (generated from PR-NF) $p_{\rm approx}$ to the exact density $p_{\rm exact}$, i.e.,
\begin{equation}\label{eq_kl}
  D_{\rm KL}(p_{\rm exact} \,\| \,p_{\rm approx}) = \int_{-\infty}^{\infty} p_{\rm exact}(x)\log{\left(\frac{p_{\rm exact}(x)}{p_{\rm approx}(x)}\right)}dx,
\end{equation}
where $D_{\rm KL}$ can be approximated by the Riemann sum over a uniform mesh or the mean value of $\log{\left(\frac{p_{\rm exact}(x)}{p_{\rm approx}(x)}\right)}$ over 
the training dataset $\mathcal{X}$ based on the exact solution in Eq.~\eqref{eq_ex1solu}. We monitor the KL divergence for different distributions during the training process. 
The consistency of decays between loss function and KL divergence  illustrates that the loss function $\mathcal{L}$ defined in Eq.~\eqref{eq_loss} involving the reversibility error is effective to be regarded as a loss function  for normalzizing flow. 

\begin{figure}[h!]
\begin{center}
\includegraphics[scale =0.2]{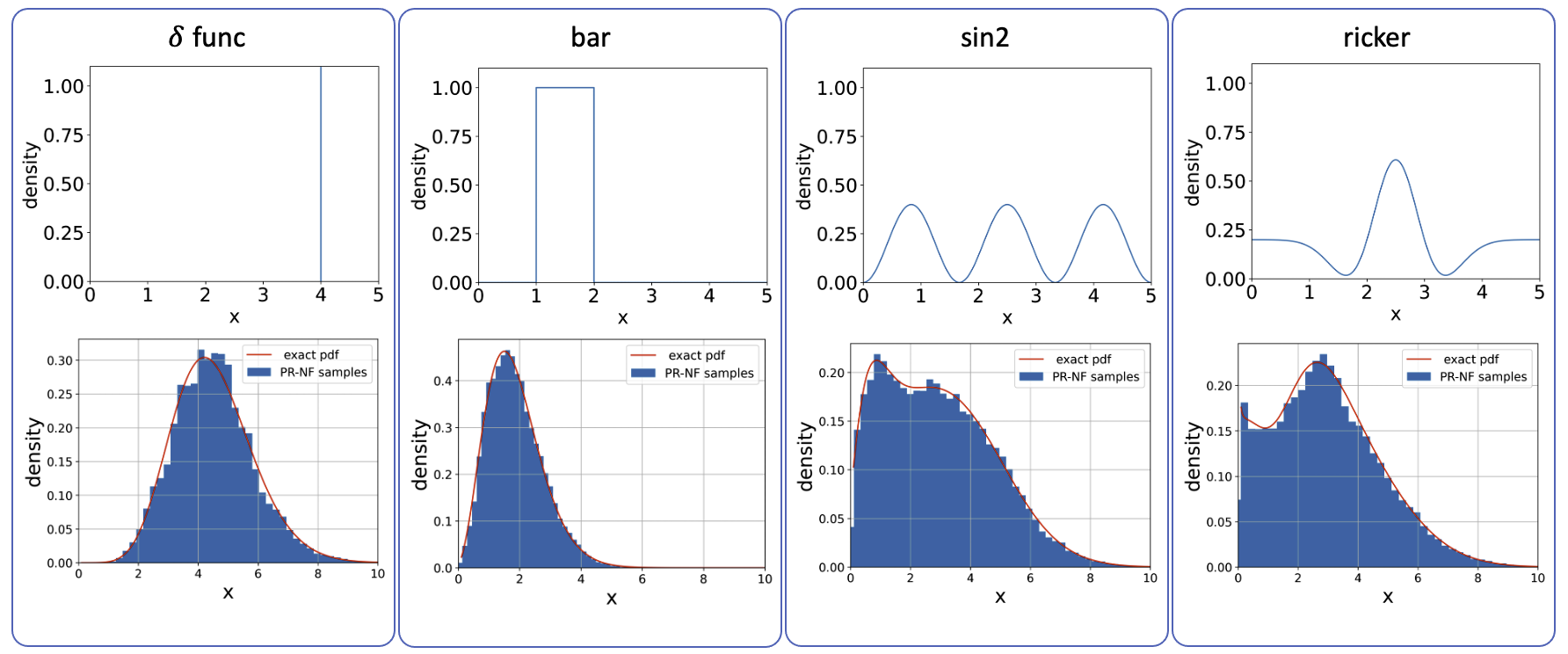}
\caption{The accuracy performance of the well-trained PR-NF model for test dataset. The top row shows the initial distributions of four dataset: $\delta$ func, bar, sin2, and ricker. The corresponding plots on the bottom row present the fitting between the exact density of $X_t$ (red curve) and the histogram (generated by PR-NF model) at $t = 0.1$. The results demonstrate that the well-trained PR-NF model can handle various initial conditions without re-training.}%
\label{fig_ex1}
\end{center}
\end{figure}
\begin{figure}[h!]
\begin{center}
\includegraphics[scale =0.4]{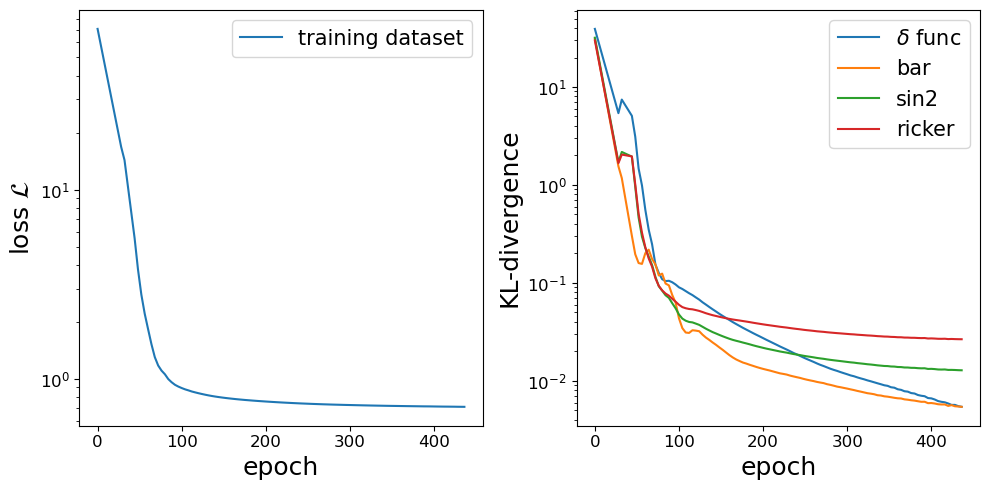}
\vspace{-0.2cm}
\caption{The decay of loss function of training dataset (left panel) and the decay of KL divergence of four test dataset (right panel). We monitor the KL divergence during the training process. The consistency  of decays between loss function and KL divergence demonstrates that the loss function $\mathcal{L}$ defined in Eq.~\eqref{eq_loss} involving the reversibility error is effective to be regarded as a loss function for normalzizing flow.}
\label{Ex1_1DKL}
\end{center}
\end{figure}




To illustrate the application of our method to high-dimensional problems, consider the ten-dimensional SDE
\begin{equation}\label{eq_10d}
X_t  =  X_0 + \int_0^t X_s ds
+  \int_0^t K X_s d  W_s \;\; \text{ with } X_0 \in \mathcal{D},
\end{equation}
where  $t=1$, $\mathcal{D}= [0,1]^{10}$,
{\tiny $K = \frac{1}{2}
    \begingroup 
\setlength\arraycolsep{10pt}
\begin{pmatrix}
    1 & 1 & & \\
 & \ddots & \ddots & \\
 &  & \ddots & 1 \\
 & & & 1
\end{pmatrix}_{10 \times 10}.
\endgroup
$}
In this case the analytical solution $X_t$ is given by
\begin{equation}\label{eq_10dsolu}
    X_t = \exp((\mathbb{I}_{10}- K ^2/2)t + KW_t))X_0.
\end{equation}
The training dataset consists of $N_{\rm train} = 20000$ samples where $X_0$ are sampled from an uniform initial distribution over the domain $\mathcal{D}$. The neural network has $N_{\rm hidden} = 1$ hidden layer with $N_{\rm neuron} = 256$ neurons. The left panel of Fig.~\ref{ex1_10d} shows the cross entropy as function of $\lambda$, with a  minimum at $\lambda = 100$. 

We use the well-trained PR-NF model to test four initial distributions, one is the normal distribution $x_{\rm test} \sim \mathcal{N}(0.5, 0.1\cdot\mathbb{I}_{10})$, other three are nonlinear transformations of the normal distribution, square:= $x_{\rm test}^2$, log:= $\ln (|x_{\rm test}| + 1)$, and sin:= $\sin{(x_{\rm test}^2)}$. The middle panel of Fig.~\ref{ex1_10d} shows the decay of loss function of the training dataset. And the decay of KL divergence of four test dataset is shown in the right panel. A good consistency is observed.

\begin{figure}[h!]
\settoheight{\tempdima}
\centering\begin{tabular}{@{}c@{ }c@{ }}
\includegraphics[width=.33\linewidth]{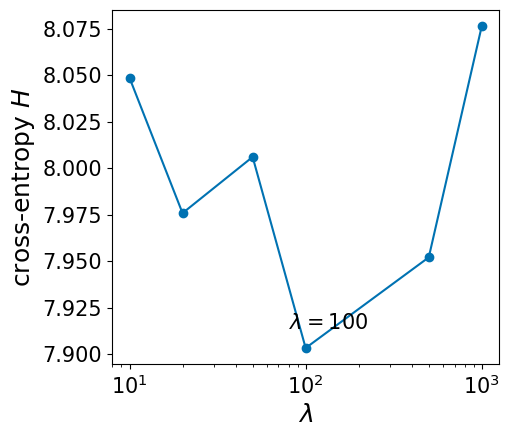}&\hspace{-0.31cm}
\includegraphics[width=.66\linewidth]{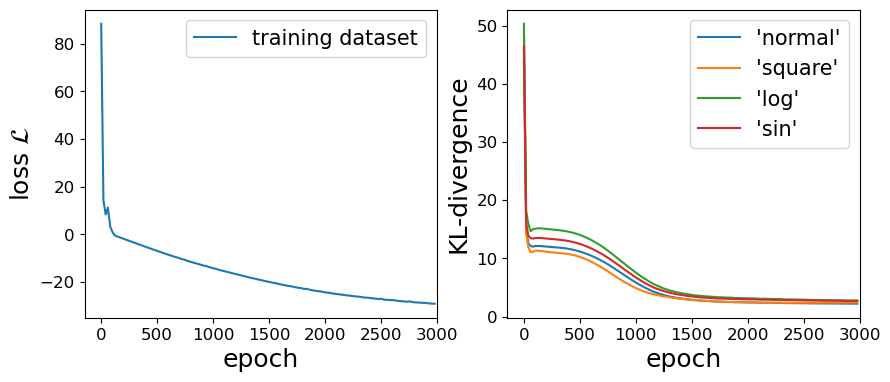}\\[-1.5ex]
\end{tabular}
\caption{Numerical results for the ten-dimensional Brownian motion case. The left panel shows the cross entropy $H(\lambda)$ with different $\lambda$, where it attains the minimum when $\lambda = 100$. Moreover, we evaluated $H(\lambda)$ for $\lambda = 1, 2000$ (not shown). It is large when $\lambda = 2000$ and is relative small when $\lambda = 1$. However, the loss decay is noisy for $\lambda = 1$.  The middle panel shows the decay of loss function and the right panel shows the decay of KL divergence of four test dataset using the well-trained PR-NF model. Since we are facing a ten-dimensional problem, the density values for samples are very small. That leads to a relatively large KL divergence that happens to the high-dimensional density estimation \cite{rezende2015variational}. We also tested the marginal KL divergence, which is on the order of $10^{-2}$.}%
\label{ex1_10d}
\end{figure}


\subsection{Generation of runaway electrons in magnetically confined  nuclear fusion plasmas}\label{sec:ex2}

Modeling and simulation of plasmas is an extremely complex computational problem involving, among other aspects, the trajectories of the plasma particles (electrons and ions), the evolution of external and self-generated electromagnetic fields, as well as energy and particle sources and sinks. 
Particle-based simulations (usually known as Monte Carlo simulations in the plasma physics literature) are based on the solution of stochastic differential equations of the form in Eq.(1)
with $X_t$ denoting the particles' coordinates in the $d$-dimensional phase space. In the full-orbit description $d=6$, but reduced models with $d<6$ are commonly used.  The deterministic term, ${\bm b}\, ds$, is given by the electromagnetic forces, and the stochastic term, ${\bm \sigma\, dW_s}$, is used to model collisional and diffusive processes.  This is a strongly multi-scale problem since there is a big disparity of spatiotemporal scales in the evolution of  the orbits and the electromagnetic fields. In addition, to avoid statistical sampling errors, these equations need to be solved for very large number of particles and large sets of different initial conditions. The goal of this section is to discuss how the proposed PR-NF method can be used to overcome some of these computational challenges in the case  of magnetically confined high temperature plasmas in toroidal  geometry. This configuration (known as tokamak) is the leading approach to achieve the elusive goal of controlled nuclear fusion for electricity production. The specific problem we consider is the simulation of relativistic runaway (RE) electrons produced during magnetic disruptions, see for example Ref.~\cite{breizman2019physics} and references therein. Understanding this problem is critical for the safe operation of nuclear fusion reactors.  

The model of interest consists of the following set of SDEs
\begin{equation}\label{ex32d}
\left\{
\begin{aligned}
dp &=  \left [E \xi\, - \frac{\gamma p}{\tau}(1-\xi^2)-C_F(p) +\frac{1}{p^2}\frac{ \partial }{\partial p} \left(p^2 C_A\right) \right ] dt + \sqrt{2 C_A} \, dW_p,\\ 
d \xi &= \left[ \frac{E \left(1-\xi^2\right)}{p} + \frac{\xi (1-\xi^2)}{\tau \gamma}
-2 \xi \frac{C_B}{p^2} \right ] dt + \frac{\sqrt{2 C_B}}{p}  \,  \sqrt{1-\xi^2}\, dW_\xi \, .
\end{aligned}
\right.
\end{equation}
This two-dimensional reduced model
tracks the evolution of the magnitude of the relativistic momentum, $p$, and 
$\xi=\cos \eta$, where $\eta$ is the pitch angle between the particle velocity and the magnetic field which is assumed fixed and given.  
 $dW_p$ and $dW_\xi$ are independent standard Brownian motions, $E$ is the electric field, and $\tau$ the radiation damping time scale. The collision operator includes the momentum diffusion, $C_A$, pitch angle scattering, $C_B$, and Coulomb drag, $C_F$, which are defined as 
$$
C_A (p) = \bar{\nu}_{ee} \, \bar{v}_T^2 \,\,\frac{\psi(y)}{y},  \quad C_F (p)=2\,\bar{\nu}_{ee}  \, \bar{v}_T \, \psi(y),
$$
$$
C_B (p) = \frac{1}{2} \,\bar{\nu}_{ee} 
\, \bar{v}_T^2 \, \, \frac{1}{y}  \left[ Z + \phi(y)- \psi(y) +  \frac{y^2}{2} \delta^4 \right], \nonumber
$$
$$
\phi(y)=\frac{2}{\sqrt{\pi}} \int_0^y e^{-s^2} ds, \quad
\psi(y)=\frac{1}{2 y^2} \left[ \phi(y)-y \frac{d \phi}{dy} \right], \quad y = \frac{1}{\bar{v}_T} \frac{p}{\gamma},
$$
$$
\gamma = \sqrt{1 + (\tilde{\delta} p)^2}, \quad \tilde{\delta} = \frac{\tilde{v}_T}{c} = \sqrt{\frac{2 \tilde{T}}{mc^2}}, \quad {\delta} = \frac{\hat{v}_T}{c} = \sqrt{\frac{2 \hat{T}_f}{mc^2}}, \quad
\bar{v}_T = \sqrt{\frac{\hat{T}_f}{\tilde{T}}},
$$
$$
\bar{\nu}_{ee} = \left( \frac{\tilde{T}}{\hat{T}_f} \right)^{3/2} \frac{\rm{ln \hat{\Lambda}}}{\rm{ln \tilde{\Lambda}}}, \quad {\rm ln} \tilde{\Lambda} = 14.9 - \frac{1}{2} {\rm ln}0.28 + {\rm ln} \Tilde{T}, \quad {\rm ln} \hat{\Lambda} = 14.9 - \frac{1}{2} {\rm ln}0.28 + {\rm ln} \hat{T}_f,
$$
with $Z$ and $c$ denoting the ion effective charge and the speed of light, respectively. 
Further details on this model can be found in Ref.~\cite{del2021generation, yang2021feynman} and references therein. 

We are interested in the hot-tail acceleration of electrons due to the rapid cooling of the plasma (thermal quench) during a magnetic disruption  
\cite{stahl2016kinetic,petrov2021numerical}.
We use a fast cooling model for the temperature and an electric field from Ohm's law with a Spitzer resistivity temperature dependent model
$ E = E_0 \left[ \frac{\tilde{T}}{\hat{T}_f} \right]^{3/2}$, 
with typical model parameters
 $Z = 1$, $\tau = 6 \times 10^3$, $E_0 = 1/2000$, $\tilde{T} = 3$, 
 $\hat{T}_f = 0.05$ and $ mc^2 = 500$.
The integration domain is $p\in (p_{\min}, p_{\max})$ and $\theta \in (0, \pi)$, where 
$(p_{\min}, p_{\max})=(0.5, 5)$, and $t_{\max}=26$. 
The different cases of initial conditions are obtained by  sampling a family of Maxwellian distributions 
\begin{equation}\label{eq_max}
    f(p,\xi,t_0) = \frac{2p^2}{\pi^{1/2} p_0^3} e^{ -(p / p_0)^2 }, \quad \text{where} \,\,p_0 = \sqrt{\frac{\hat{T}_0}{\tilde{T}}},
\end{equation}
normalized as $\int_{0}^1\int_{-1}^{1}\int_{p_{\min}}^{p_{\max}} f_0 (p,\xi) dp d\xi =1$, and parameterized by $\hat T_0$. 
Exploration of the hot-tail generation of RE in this case requires the solution of this problem for different values of $\hat{T}_0$. In the standard Monte Carlo approach this is a computational intense problem because each time $\hat{T}_0$ changes the problem needs to be recomputed for the new initial condition. However, as it will be shown bellow,  this is not the case with the  proposed PR-NF method.

In this problem the training dataset $\mathcal{V}$ consists of $N_{\rm train} = 30000$ samples with initial positions, $\{(p_0,\xi_0)^{(n)}\}$, $n = 1, \ldots,N_{\rm train}$,  uniformly distributed in the integration domain and terminal positions, $(p_{t_{\max}},\xi_{t_{\max}})^{(n)}$,  obtained by a direct  Monte Carlo simulation of Eqs.~\eqref{ex32d}. The neural network architecture of the PR-NF model uses $N_{\rm hidden} = 1$ hidden layer and $N_{\rm neuron} = 512$ neurons. 
To choose the hyperparameter $\lambda$, which  controls the relative importance of the pseudo-reversivility and the negative log-likelihood losses,
we follow the same procedure as in Section~\ref{sec:ex1}. Namely, we select the value of $\lambda$ for which the cross entropy, $H(\lambda)$, has a minimum, which in this case corresponds to $\lambda=100$.

To check the accuracy of the PR-NF model we consider a 
test set of  $N_{\rm test} = 20000$ samples drawn from the Maxwellian distribution in Eq.~\eqref{eq_max} with $\hat{T}_0 = 10$. 
Fig.~\ref{ExRE_pdf} compares the final probability density function (pdf) at $t_{\rm max}=26$ computed with the PR-NF method and the direct Monte Carlo simulation. To construct  the pdf from the $N_{\rm test}$ data points, we use a Gaussian kernel density estimation. The plots on the top row show the one-dimensional marginal pdfs for the pitch angle $\theta$ (left) and momentum $p$ (right).  The plots on the bottom row show the two-dimensional contour plots of the $\log_{10}$ of the pdfs for the PR-NF (left) and the MC (right) simulations. The local pointwise differences between PR-NF and MC methods are of the order $10^{-2}$, 
which implies a small error in the evaluation of quantities of interest like the  production of runaway electrons shown in Fig.~\ref{ExRE_RE}.

\begin{figure}[h!]
\begin{center}
\includegraphics[scale =0.5]{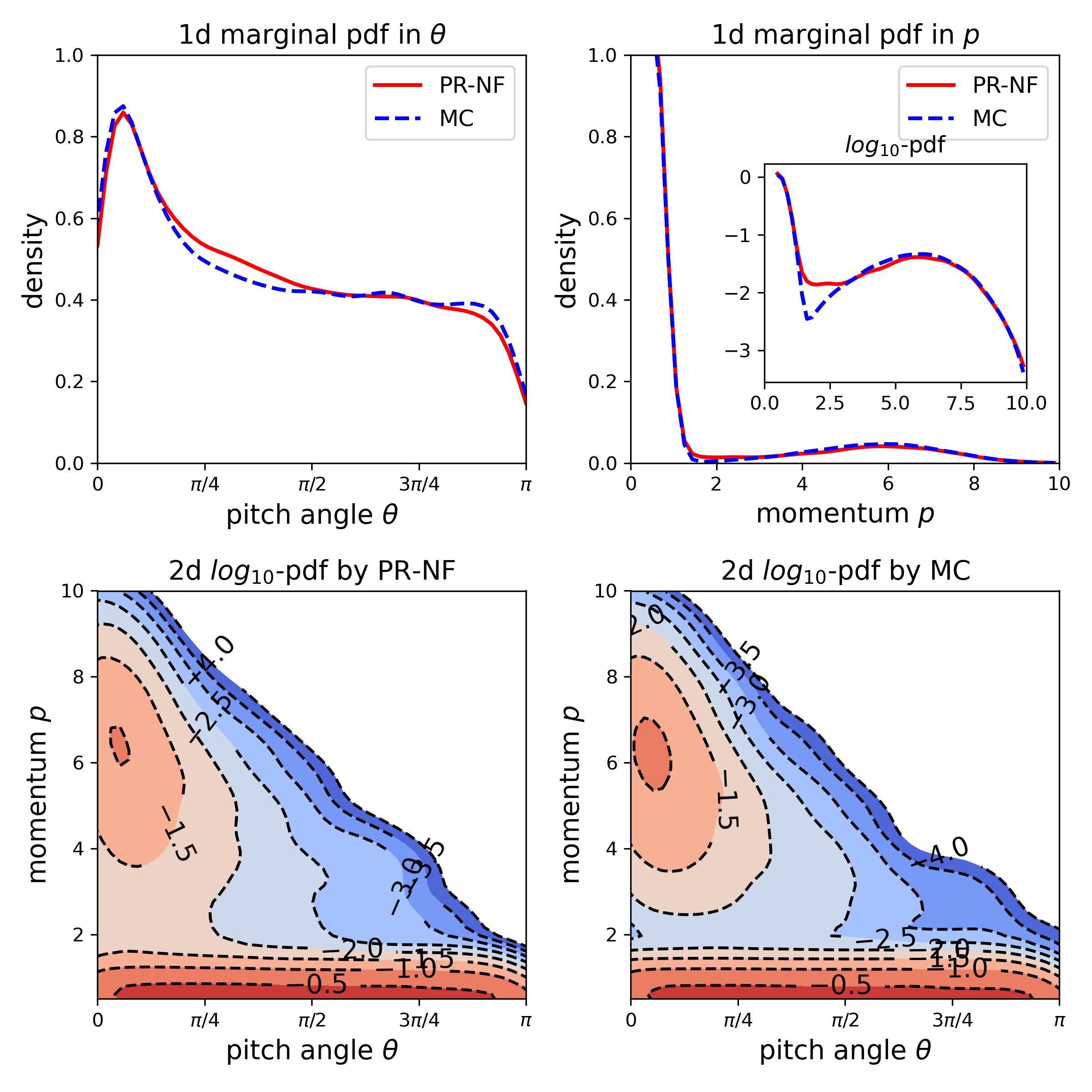}
\vspace{-0.2cm}
\caption{Final state of runaway electrons probability density function, $f(\theta,p,t_{max})$, according to the model in Eq.~~(\ref{ex32d}) with initial condition, $f(\theta,p,t_0)$ in Eq.~(\ref{eq_max}) with $\hat{T}_0 = 10$. The top row compares the $1d$ marginal distributions for the pitch angle (left) and the momentum (right) computed using the PR-NF and the MC methods. The inset in the the top plot on the right highlights, in logarithmic scale, the  bump in the distribution resulting from the acceleration of the hot tail. The contour plots at the bottom compare the $2d$ distributions using a $\log_{10}$ scale. The local pointwise differences between PR-NF and MC methods are of the order $10^{-2}$. Such agreement between MC and the PRNF methods is sufficient to ensure the accuracy of quantity of interest, for example, the runaway electron production rate in Fig.~\ref{ExRE_RE}.
}
\label{ExRE_pdf}
\end{center}
\end{figure}

To illustrate the performance of the PR-NF method in the computation of the final state for different initial conditions, we consider the quantity of interest
\begin{equation}\label{eq_pr}
n_{\rm RE} = \int_{-1}^{1}\int_{p^*}^{p_{\max}} f_{t_{\max}} (p,\xi) dp d\xi,
\end{equation}
representing the total fraction of runaway electrons produced by the hot-tail mechanism 
during the thermal quench, where
$f_{t_{\max}}$ is the density function of electrons at $t_{\max} = 26$,   $p^*=1.75$ is the RE energy threshold and $p_{\max}$ is a numerical upper bound. 
We consider a family of Maxwellian initial conditions of the form in Eq.~\eqref{eq_max}
with $\hat{T}_0 = 1, 2, 3, 4, 5, 6, 7, 8, 9, 10.$
Figure~\ref{ExRE_RE} compares $n_{\rm RE}$ 
computed using a direct MC simulation that requires to solve the whole problem from the start for each value of $\hat T_0$, with the PR-NF method that reduces the computation to the direct evaluation of the final state using the trained neural network. 
This numerical simulation  is based on $N_{\rm sample}=20000$ samples for different $\hat{T}_0$. By using a single PR-NF model, we can obtain the orbits of particles for ten initial conditions without any additional simulations or training. However, the Monte Carlo method has to repeatedly simulate the particles' orbits for ten times. For the simulation, the time step size of the MC method is $\Delta t = 0.001$ and the number of epochs of the PR-NF method is $N_{\rm epoch} = 20000$. We record the running wall-clock time (measured using the python function ``time.time()") between these two methods. For the results in Fig.~\ref{ExRE_RE}, the error between the PR-NF and MC method (viewed as the ground-truth) is acceptable. The accuracy of PR-NF model is enough to predict and analyze the profile of fraction runaway electrons production rate $n_{\rm RE}$. Moreover, the total running time of the MC method of these ten cases of Maxwellian distribution is around $C_{\rm MC} = 4000$ sec. The total running time of the PR-NF method consists of two parts, the offline cost is around $C_{\rm offline} =1354$ sec and the online cost is around $C_{\rm online} = 12$ sec. The PR-NF model is faster that the MC method, especially on the online cost. This efficiency advantage is more valuable when we handle a large number of Maxwellian distributions.

\begin{figure}[h!]
\begin{center}
\includegraphics[scale =0.6]{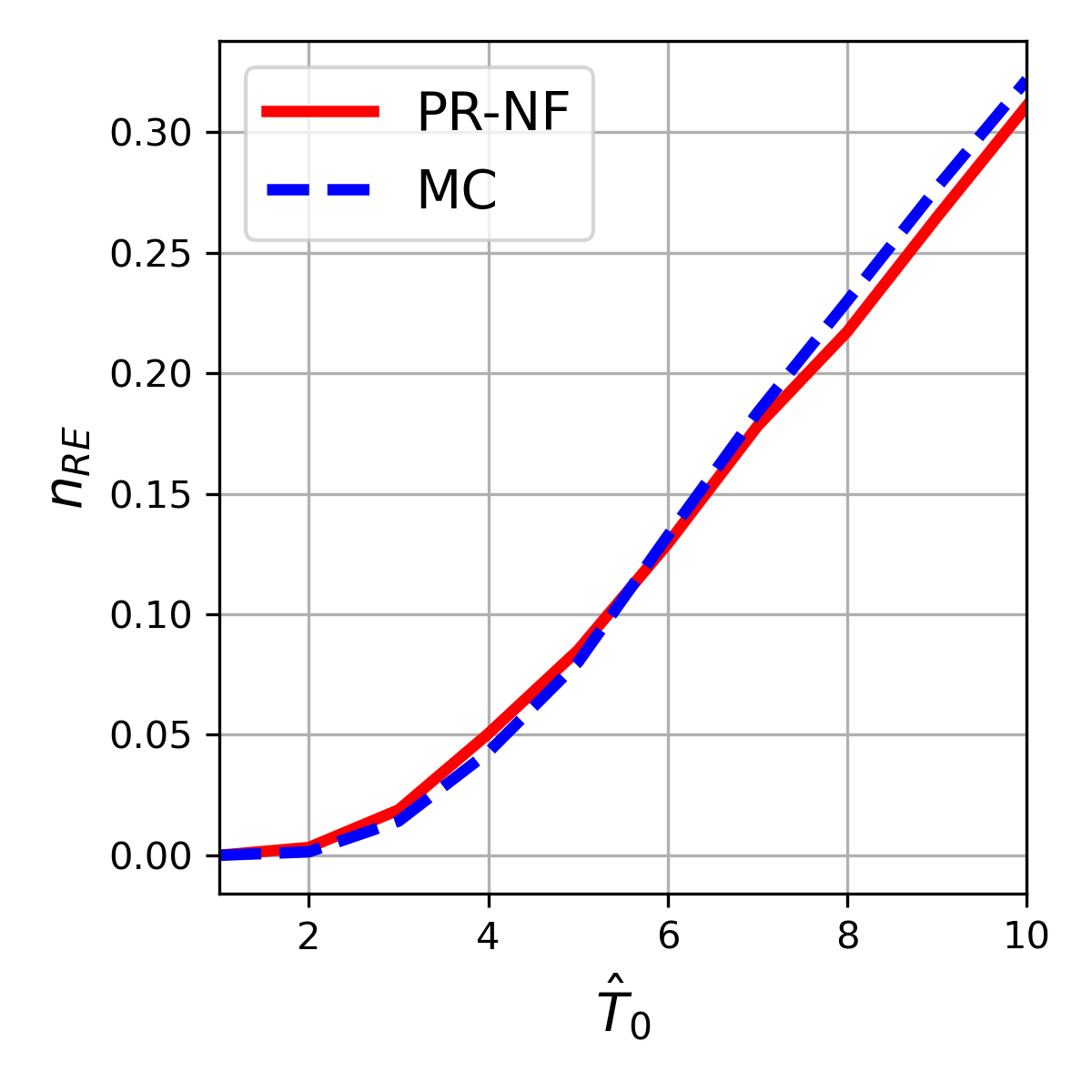}
\vspace{-0.2cm}
\caption{
Quantity of interest $n_{\rm RE}$ in Eq.~\eqref{eq_pr} represents the  production of runaway electrons  as function of the parameter $\hat{T}_0$ in Eq.~(\ref{eq_max}). The red curve shows the  result using the PR-NF surrogate model and the blue curve the  result using the direct MC method. A single PR-NF model can compute $n_{\rm RE}$ for ten initial conditions without
any additional simulations or training, where the online cost is much cheaper than MC method. The accuracy of PR-NF model
is sufficient to predict the profile of the production rate $n_{\rm RE}$ as the MC method (around 1\% absolute error is acceptable here).}
\label{ExRE_RE}
\end{center}
\end{figure}

\subsection{Passive scalar advection-diffusion  transport in a 3D chaotic flow}\label{sec:exfluid}

Understanding passive scalar transport is a problem of significant interest in fluid dynamics in general and environmental engineering, oceanography, and atmospheric sciences in particular. By passive we mean that the transported scalar exactly follows the given velocity field without modifying it. 

As a specific example to illustrate the application of the PR-NF method  to this problem, we consider the ABC (Arnold-Beltrami-Childress) flow \cite{dombre1986chaotic} which 
is a three-dimensional incompressible velocity field which is an exact solution of Euler's equation. 
In Cartesian coordinates this velocity field has components ${\bf v}=(v_x,v_y,v_z)$ with $v_x=A \sin{z} + C \cos{y}$, $v_y= B \sin{x} + A \cos{z}$ and $v_z= C \sin{y} + B \cos{x}$. For a wide range of the $A$, $B$ and $C$ parameters, this flow is not integrable.
That is, it exhibits chaotic advection and the sensitive dependence on initial condition gives rise to rapid mixing and long-range transport \cite{shlesinger1993strange}. 

In addition to advection, passive scalar transport is affected by diffusion. In particle based simulations this important effect can be modeled by adding a stochastic term. Following this approach we consider the following set of SDEs 
\begin{equation}\label{eq_exfluid}
\left\{
\begin{aligned}
dx &=  P_e\, [A \sin{z} + C \cos{y}] \, dt +\, dW_x \, ,\\ 
dy &=  P_e\, [B \sin{x} + A \cos{z}] \, dt +\, dW_y \, ,\\ 
dz &=  P_e\, [C \sin{y} + B \cos{x}] \, dt +\, dW_z \, . 
\end{aligned}
\right.
\end{equation}
where $dW_x$, $dW_y$ and $dW_z$ are independent Brownian motions, and the Pecl\'e number, $P_e$ is a dimensionless parameter controlling the relative importance of advection and diffusion. When, $P_e \ll  1$, transport is dominated by diffusion and when, $P_e \gg  1$, transport is dominated by advection.

The initial conditions,  $\{x_0^{(n)}, y_0^{(n)}, z_0^{(n)}\}_{n=1}^{N_{\rm train}}$, of the training dataset consists of an ensemble of $N_{\rm train} = 30000$ random points uniformly distributed  in the cubic domain  $\mathcal{D} =  [0,2\pi] \times [0,2\pi] \times [0,2\pi]$. 
The terminal positions, $\{x_{t_{\max}}^{(n)}, y_{t_{\max}}^{(n)}, z_{t_{\max}}^{(n)}\}_{n=1}^{N_{\rm train}}$, 
of the training dataset are obtained by a direct Monte Carlo simulation of 
 Eq.~\eqref{eq_exfluid}. 
 In the calculation presented we use  $t_{\rm max} = 2$, and model parameters $P_e = 3$,  $A = 1$, $B = 1$, and  $C = 0.25$. 
 For the PR-NF neural network model we use  $N_{\rm hidden} = 1$ hidden layer with $N_{\rm neuron} = 512$ neurons.
Once the training process is done, the PR-NF model can be used as a surrogate model to predict the final state for various initial conditions, without the need to integrate Eqs.~\eqref{eq_exfluid}.

As a first test, we apply the PR-NF to predict the final state of an initial condition of the form
\begin{equation}\label{exfluid_initial}
    f(x,y,z,t_0) = H(x,y,z) \exp{\left[ - \left(\frac{x-x_c}{\sigma_x}\right)^2 - \left(\frac{y-y_c}{\sigma_y}\right)^2 - \left(\frac{z-z_c}{\sigma_z}\right)^2\right]},
\end{equation}
modeling, for example, a Gaussian-shaped cloud of a pollutant, with $\sigma_x = \pi/3$, $\sigma_x = \pi/5$, $\sigma_x = \pi/4$, localized at
$(x_c, y_c, z_c) \in \mathcal{D}$,
where $H(x,y,z) = 1$ if $(x,y,z) \in \mathcal{D}$ and $H(x,y,z) = 0$ otherwise. 
 Figure~\ref{Exfluid_1} shows a good agreement between the final pdf of the passive scalar, $f(x,y,z,t_{\rm max}) $, computed with the PR-NF surrogate model and the pdf computed  using direct Monte Carlo simulation for the case 
 $(x_c,y_c,z_c) = (\pi/2,\pi,\pi)$. 
 Since both methods are particle-based, once the discrete particle data have been obtained, the corresponding pdfs are constructed  using a standard Gaussian-kernel estimation.
 To ease the comparison of these $3d$ pdfs, 
Fig.~\ref{Exfluid_1} shows the $1d$ marginal distributions, e.g., $g(x)=\int dy \int dz f(x,y,z,t_{\rm max})$, and the $2d$ marginal distributions, e.g., $g(x,y)= \int dz f(x,y,z,t_{\rm max})$. The mixing due to the combination of chaotic advection and diffusion rapidly displaces the scalar outside the 
$\mathcal{D}$ domain. 

\begin{figure}[h!]
\begin{center}
\includegraphics[scale =0.4]{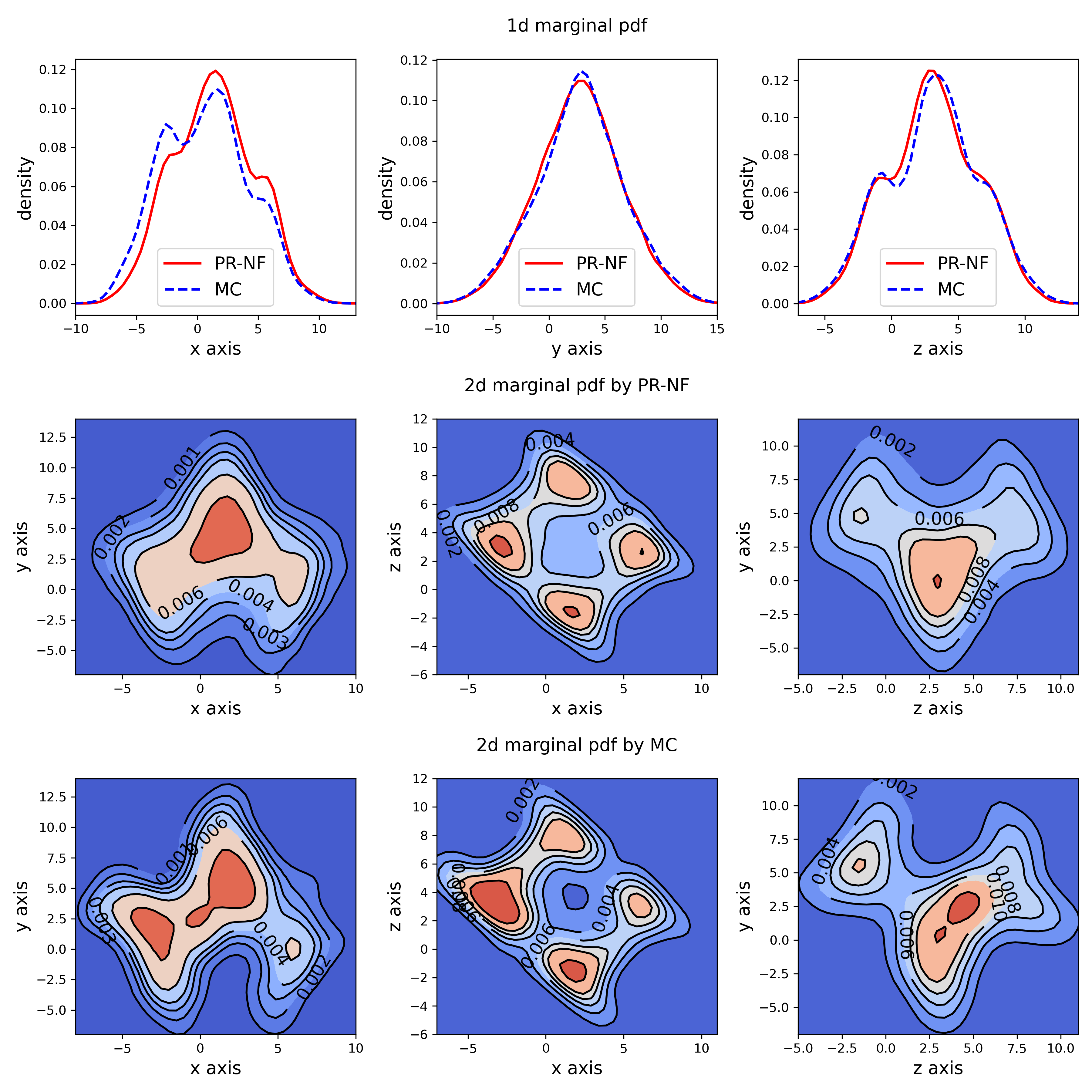}
\vspace{-0.2cm}
\caption{
Final state of the probability density function, $f(x,y,z,t_{max})$, of the scalar resulting from the advection-diffusion  in the ABC fluid velocity model with diffusion in Eq.~~(\ref{eq_exfluid}) for the initial condition, $f(x,y,z,t_{0})$, in Eq.~(\ref{exfluid_initial}). 
Since the pdf is $3d$, to ease the comparison between the PR-NF and the MC methods, we show the $1d$ marginal distributions in the top row and the $2d$ marginal distribution if the bottom rows. 
The controur plots in one column share the same level, and the scale is of the order $10^{-3}$. The agreement between the two methods is sufficient to ensure the accuracy of the quantity of interest shown in Fig.~\ref{Exfluid_2}.
}
\label{Exfluid_1}
\end{center}
\end{figure}

As a second application we consider a ``target" transport problem of interest, for example, to environmental fluid dynamics. The problem consists of finding the concentration of the scalar in a target domain, 
${\cal T}=[x_{min},x_{max}] \times [-\infty, \infty] \times [z_{min},z_{max}]$, 
resulting from the transport of a concentrated pollutant cloud released inside the $\mathcal{D} =  [0,2\pi] \times [0,2\pi] \times [0,2\pi]$ domain. The initial cloud will be described using the Gaussian model in Eq.~\eqref{exfluid_initial} with $\sigma_x = \pi/3$, $\sigma_x = \pi/5$, $\sigma_x = \pi/4$.
The $y$-coordinate of the cloud will be fixed at $y_c=\pi$ and the goal is to compute the total scalar density in ${\cal T}$ as function of the $(x_c,z_c)$ coordinates of the center of the initial cloud in ${\cal D}$. The quantity of interest in this case is
\begin{equation}\label{eq_part}
n_{{\cal T}} (x_c,z_c)= \int_{x_{\min}}^{x_{\max}}\int_{-\infty}^\infty\int_{z_{\min}}^{z_{\max}} f (x,y,z,t_{\rm max}) dx dy dz \, ,
\end{equation}
and for the calculation presented here  we will use $x_{\min} = 0$, $x_{\max} = \pi$, $z_{\min} = 2\pi$, and $z_{\max} = 3\pi$. 

To perform this calculation a $N_x \times N_z$ uniform grid is constructed in the  
$(x_c, z_c) \in [0,2\pi]\times [0,2\pi]$ space and the $N_x N_z=N_{\rm ic}$ grid nodes are used to define the  
 ensemble $\{x^i_c,z^i_c\}_{i=1}^{N_{\rm ic}}$.
For each element of the ensemble, the standard direct Monte Carlo approach requires the solution of the system of SDE in Eq.~(\ref{eq_exfluid}) for $N_{\rm test}$ particles with initial conditions sampled from the Gaussian cloud in Eq.~(\ref{exfluid_initial}) centered at $(x^i_c,y_c=\pi,z^i_c)$. In total, this approach requires $N_{\rm ic} N_{\rm test}$ solutions of the SDEs. On the other hand, the PR-NF method only requires to solve the  SDE in Eq.~(\ref{eq_exfluid}) for $N_{\rm train}$ initial conditions and, once the training is done, the computation of the $N_{\rm ic} N_{\rm test}$ initial conditions can be done by simply evaluating the surrogate neural network model. 

In the case discussed here we use $N_x=N_z=21$, and $N_{\rm test}=20000$.
With the  Monte Carlo method  this requires solving $8.82 \times 10^6$ SDEs. For $t_{\rm final}=2$, with step size $\Delta t=0.001$, the total run-time to produce the results reported in Fig.~\ref{Exfluid_2} was $C_{\rm MC} = 5320$ sec. On the other hand, the training of the PR-NF neural network, like in the previous calculation,  is done using  $N_{\rm train} =30000$, $N_{\rm hidden} = 1$ hidden layer with $N_{\rm neuron} = 512$ neurons, and $N_{\rm  epoch} =20000$. For these parameters, the offline cost of the training is around $C_{\rm offline} = 2200$ sec. But, once the training is done, the evaluation of the different initial conditions has minimal cost. In particular, the total PR-NF run-time to produce the results reported in Fig.~\ref{Exfluid_2} is of the order $C_{\rm online} = 50$ sec.
The contour plots of $n_{\cal T}$ shown in Fig.~\ref{Exfluid_2} 
provided evidence that the much more efficient PR-NF surrogate model reproduces well the results obtained with the direct, time-consuming, MC method for the target transport problem. 

\begin{figure}[h!]
\begin{center}
\includegraphics[scale =0.5]{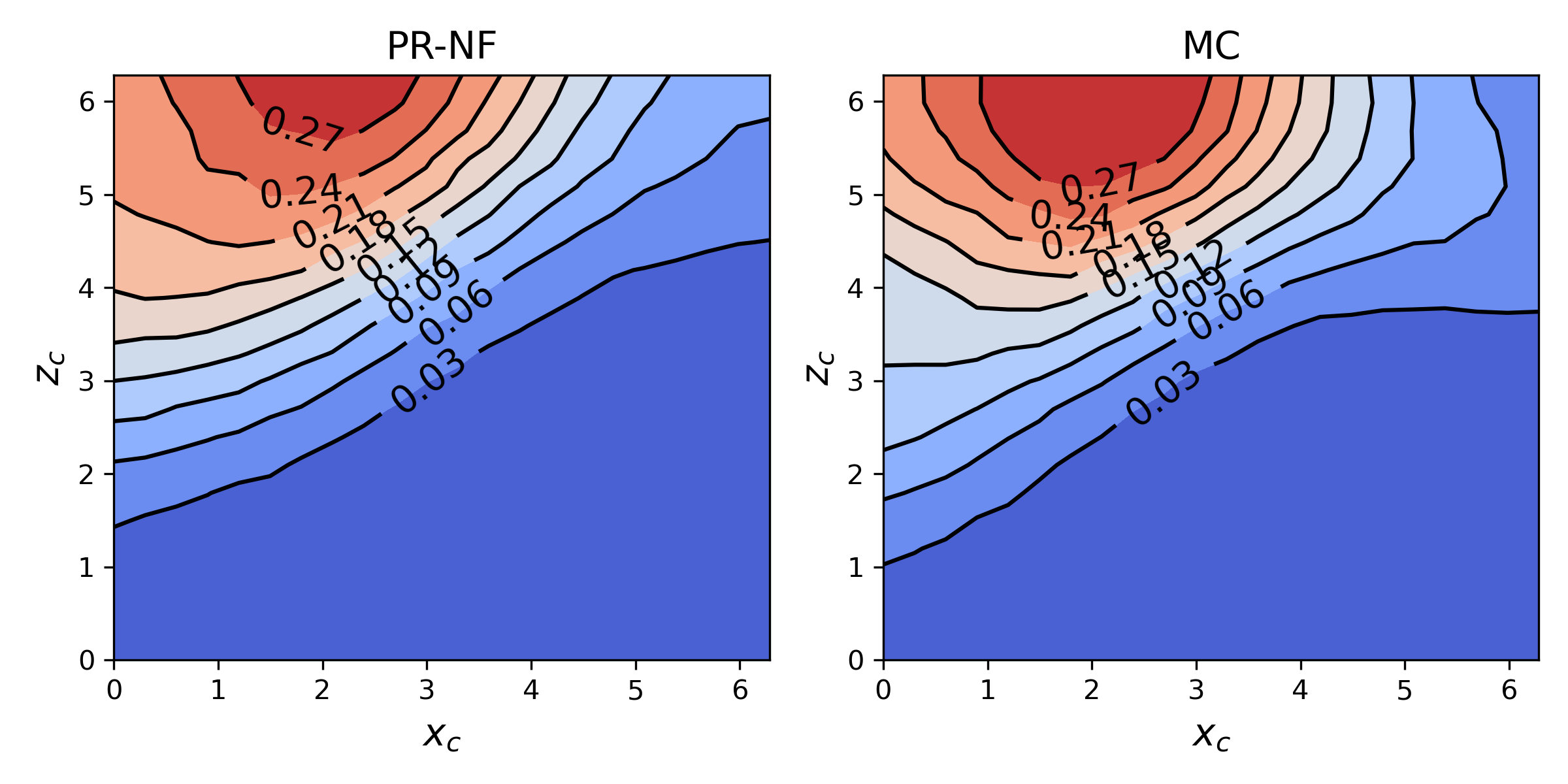}
\vspace{-0.2cm}
\caption{Contour plots of quantity of interest $n_{\tau}(x_c,z_c)$ in Eq.~(\ref{eq_part}) computed using the PR-NF surrogate model (left panel) and the 
direct MC method (right panel).  For a given $(x_c,z_c)$,  $n_{\tau}(x_c,z_c)$ denotes the total density of the scalar in the target region 
${\cal T}(x,y,z)=[0,\pi] \times [-\infty, \infty] \times [2\pi,3\pi]$ 
resulting from the transport of an initial Gaussian cloud, Eq.~(\ref{exfluid_initial}), centered at  $(x_c,y_c=\pi, z_c) \in
\mathcal{D} (x,y,z)=  [0,2\pi] \times [0,2\pi] \times [0,2\pi]$. The agreement of the two calculation provides evidence of the accuracy of the significantly faster PR-NF method (absolute error between two methods in $n_{\tau}$ is of the order $10^{-2}$). The results of PR-NF (left panel) is sufficient to simulate the profile of $n_{\tau}$. Moreover, the online cost of PR-NF method is about 100 times faster than MC method.}
\label{Exfluid_2}
\end{center}
\end{figure}

\section{Conclusion}
We proposed an accurate and efficient normalizing-flow algorithm for the solution of stochastic differential equations with arbitrary initial conditions. Our approach leverages the pseudo-reversible normalizing flow neural network framework. The novelty of our normalizing flow approach can be viewed from two perspectives. Firstly, it enables learning the distribution of the final state based on any given initial state. This allows the model to be trained just once, and then applied to handle various initial distributions effectively. Secondly, we develop a pseudo-reversible neural network architecture that relaxes the strict reversibility constraint. This feature simplifies the implementation of feed-forward neural networks, which model both forward and inverse flows.


Our subsequent objective is to expand the applicability of the proposed method to tackle more complex problems. For example, the two-dimensional runaway electron model discussed in Section \ref{sec:ex2} is a reduced 2-D plasma  transport model. Going beyond this simplified description, we envision extending the capabilities of the PR-NF model to encompass the six-dimensional full-orbit transport model, which poses significant challenges when employing PDE-based methods. Additionally, we aim to explore the complete dynamics of stochastic differential equations, not limited to a single time instant. 

\section*{Acknowledgement}

This material is based upon work supported by the U.S. Department of Energy, Office of Science, Office of Advanced Scientific Computing Research, Applied Mathematics program under the contract ERKJ387, Office of Fusion Energy Science, and Scientific Discovery through Advanced Computing (SciDAC) program, at the Oak Ridge National Laboratory, which is operated by UT-Battelle, LLC, for the U.S. Department of Energy under Contract DE-AC05-00OR22725.

\bibliographystyle{siamplain}
\bibliography{library,escape_ref,nfref}

\begin{thebibliography}{10}

\bibitem{both2019temporal}
{\sc G.-J. Both and R.~Kusters}, {\em Temporal normalizing flows}, arXiv
  preprint arXiv:1912.09092,  (2019).

\bibitem{breizman2019physics}
{\sc B.~N. Breizman, P.~Aleynikov, E.~M. Hollmann, and M.~Lehnen}, {\em Physics
  of runaway electrons in tokamaks}, Nuclear Fusion, 59 (2019), p.~083001.

\bibitem{creswell2018generative}
{\sc A.~Creswell, T.~White, V.~Dumoulin, K.~Arulkumaran, B.~Sengupta, and A.~A.
  Bharath}, {\em {Generative adversarial networks: An overview}a}, IEEE signal
  processing magazine, 35 (2018), pp.~53--65.

\bibitem{del2021generation}
{\sc D.~Del-Castillo-Negrete, M.~Yang, M.~Beidler, and G.~Zhang}, {\em
  Generation and mitigation of runaway electrons: spatio-temporal effects in
  dynamic scenarios}, tech. report, Oak Ridge National Lab.(ORNL), Oak Ridge,
  TN (United States), 2021.

\bibitem{dinh2016density}
{\sc L.~Dinh, J.~Sohl-Dickstein, and S.~Bengio}, {\em Density estimation using
  real nvp}, arXiv preprint arXiv:1605.08803,  (2016).

\bibitem{dombre1986chaotic}
{\sc T.~Dombre, U.~Frisch, J.~M. Greene, M.~H{\'e}non, A.~Mehr, and A.~M.
  Soward}, {\em Chaotic streamlines in the abc flows}, Journal of Fluid
  Mechanics, 167 (1986), pp.~353--391.

\bibitem{feng2021solving}
{\sc L.~Feng, Xiaodong~Zeng and T.~Zhou}, {\em Solving time dependent
  fokker-planck equations via temporal normalizing flow}, Communications in
  Computational Physics, 32 (2022), pp.~401--423.

\bibitem{grathwohl2018ffjord}
{\sc W.~Grathwohl, R.~T. Chen, J.~Bettencourt, I.~Sutskever, and D.~Duvenaud},
  {\em Ffjord: Free-form continuous dynamics for scalable reversible generative
  models}, arXiv preprint arXiv:1810.01367,  (2018).

\bibitem{GUO2022111202}
{\sc L.~Guo, H.~Wu, and T.~Zhou}, {\em Normalizing field flows: Solving forward
  and inverse stochastic differential equations using physics-informed flow
  models}, Journal of Computational Physics, 461 (2022), p.~111202.

\bibitem{keller2021self}
{\sc T.~A. Keller, J.~W. Peters, P.~Jaini, E.~Hoogeboom, P.~Forr{\'e}, and
  M.~Welling}, {\em Self normalizing flows}, in International Conference on
  Machine Learning, PMLR, 2021, pp.~5378--5387.

\bibitem{kingma2014adam}
{\sc D.~P. Kingma and J.~Ba}, {\em Adam: A method for stochastic optimization},
  arXiv preprint arXiv:1412.6980,  (2014).

\bibitem{kobyzev2020normalizing}
{\sc I.~Kobyzev, S.~J. Prince, and M.~A. Brubaker}, {\em Normalizing flows: An
  introduction and review of current methods}, IEEE transactions on pattern
  analysis and machine intelligence, 43 (2020), pp.~3964--3979.

\bibitem{lu2022learning}
{\sc Y.~Lu, R.~Maulik, T.~Gao, F.~Dietrich, I.~G. Kevrekidis, and J.~Duan},
  {\em Learning the temporal evolution of multivariate densities via
  normalizing flows}, Chaos: An Interdisciplinary Journal of Nonlinear Science,
  32 (2022), p.~033121.

\bibitem{oksendal2003stochastic}
{\sc B.~{\O}ksendal and B.~{\O}ksendal}, {\em Stochastic differential
  equations}, Springer, 2003.

\bibitem{trans}
{\sc G.~Papamakarios, E.~Nalisnick, D.~J. Rezende, S.~Mohamed, and
  B.~Lakshminarayanan}, {\em Normalizing flows for probabilistic modeling and
  inference}, The Journal of Machine Learning Research, 22 (2021),
  pp.~2617--2680.

\bibitem{papamakarios2017masked}
{\sc G.~Papamakarios, T.~Pavlakou, and I.~Murray}, {\em Masked autoregressive
  flow for density estimation}, Advances in neural information processing
  systems, 30 (2017).

\bibitem{petrov2021numerical}
{\sc Y.~V. Petrov, P.~Parks, and R.~Harvey}, {\em Numerical simulation of the
  hot-tail runaway electron production mechanism using cql3d and comparison
  with smith--verwichte analytical model}, Plasma Physics and Controlled
  Fusion, 63 (2021), p.~035026.

\bibitem{UAT}
{\sc A.~Pinkus}, {\em Approximation theory of the mlp model in neural
  networks}, Acta numerica, 8 (1999), pp.~143--195.

\bibitem{rezende2015variational}
{\sc D.~Rezende and S.~Mohamed}, {\em Variational inference with normalizing
  flows}, in International conference on machine learning, PMLR, 2015,
  pp.~1530--1538.

\bibitem{rippel2013high}
{\sc O.~Rippel and R.~P. Adams}, {\em High-dimensional probability estimation
  with deep density models}, arXiv preprint arXiv:1302.5125,  (2013).

\bibitem{shlesinger1993strange}
{\sc M.~F. Shlesinger, G.~M. Zaslavsky, and J.~Klafter}, {\em Strange
  kinetics}, Nature, 363 (1993), pp.~31--37.

\bibitem{shonkwiler2009explorations}
{\sc R.~W. Shonkwiler and F.~Mendivil}, {\em Explorations in monte carlo
  methods}, Springer Science \& Business Media, 2009.

\bibitem{stahl2016kinetic}
{\sc A.~Stahl, O.~Embr{\'e}us, G.~Papp, M.~Landreman, and T.~F{\"u}l{\"o}p},
  {\em Kinetic modelling of runaway electrons in dynamic scenarios}, Nuclear
  Fusion, 56 (2016), p.~112009.

\bibitem{teng2021level}
{\sc Y.~Teng, Z.~Wang, L.~Ju, A.~Gruber, and G.~Zhang}, {\em Level set learning
  with pseudo-reversible neural networks for nonlinear dimension reduction in
  function approximation}, arXiv preprint arXiv:2112.01438,  (2021).

\bibitem{yang2021feynman}
{\sc M.~Yang, G.~Zhang, D.~del Castillo-Negrete, and M.~Stoyanov}, {\em A
  feynman-kac based numerical method for the exit time probability of a class
  of transport problems}, Journal of Computational Physics, 444 (2021),
  p.~110564.

\end{thebibliography}
\end{document}